\title{\textbf{Boundary regularity for anisotropic minimal Lipschitz graphs}}
\author[1]{Antonio De Rosa}\affil[1]{\small\textit {Department of Mathematics, University of Maryland, College Park, MD 20742, USA} }
\author[2]{Reinaldo Resende}\affil[2]{\small\textit {Department of Mathematics, Carnegie Mellon University, Pittsburgh, PA 15213, USA}}
\begin{document}

\date{}
\maketitle

\begin{abstract}
We prove that $m$-dimensional Lipschitz graphs in any codimension with $C^{1,\alpha}$ boundary and anisotropic mean curvature bounded in $L^p$, $p > m$, are regular at every boundary point with density bounded above by $1/2 +\sigma$, provided the anisotropic energy satisfies the uniform scalar atomic condition.
\end{abstract}

{\bf MSC2020:} 49Q05, 49Q20, 53A10, 35D30.\\
{\bf Keywords:} Varifolds, boundary regularity, anisotropic energies, geometric measure theory.

\section{Introduction}

\subsection{Regularity theorems for the area functional}\label{Sec1}
In his seminal work \cite{All}, Allard developed the regularity theory for varifolds with bounded first variation. He first obtained a rectifiability theorem, proving that, for every $m$-varifold $\bV$,
\begin{equation}\label{R}\eqname{(R)}
\mbox{if} \, \sup_{\|X\|_{\infty}\leq 1}\delta\bV(X) \leq 1 \ , \mbox{then} \, \bV\res\{x\in\R^{m+n}: \udens{m}{\bV}{x}>0\} \, \mbox{is a rectifiable varifold}.
\end{equation}

Additionally, he proved a celebrated $\varepsilon$-regularity theorem, which guarantees, for every $m$-varifold $\bV$ with generalized mean curvature in $\Lp{p}(\cH^m)$, $p>m$, and $\cH^m(\spt(\|\bV\|)\cap \ball{x}{r})$ close to $\omega_m r^m$, that $\spt(\|\bV\|)$ is $C^{1,\eta}$ locally around $x$ for some $\eta\in (0,1)$.  

Afterwards, in \cite{AllB}, Allard extended this regularity result to varifolds with $C^{1,1}$ boundary. Here the boundary is intended as a $C^{1,1}$ submanifold $\Gamma$ with dimension $m-1$ such that the first variation of the varifold is bounded away from $\Gamma$. 

One of the reasons why Allard considered a $C^{1,1}$ boundary is that for each point $x\in \Gamma$ there is a neighborhood of $x$ in $\Gamma$ such that the distance function $y\mapsto  \operatorname{dist}(y,\Gamma)$ is differentiable in a tubular neighborhood of $\Gamma$. For more details, we refer the reader to \cite{ghomi2022total}, where the authors explore Federer's notion of reach of $\Gamma$ to prove that $\Gamma$ is $C^{1,1}$ if, and only if, the reach is strictly positive. Bourni \cite{Theodora} generalized Allard's boundary regularity theorem to  $C^{1,\alpha}$ boundaries, for $\alpha\in (0,1)$, using a Whitney partition argument to overcome the non-differentiability of the distance function above around $\Gamma$.

\subsection{Anisotropic functionals}\label{Sec2}

A natural question is whether or not the regularity theorems mentioned in Section \ref{Sec1} still hold if the first variation is not computed with respect to the area functional, but rather with respect to more general anisotropic functionals $\cF:\R^{m+n}\times\Gr(m,n)\to (0,+\infty)$. 

Anisotropic functionals, together with their minimizers and critical points, have been extensively studied, and several results available for the area functional have been extended to the anisotropic setting.
This is typically not an easy task, as several basic properties of isotropic minimal surfaces dramatically fail for anisotropic minimal surfaces. More precisely, Allard's proof of the aforementioned regularity theorems strongly rely on the well-known \emph{monotonicity formula}. However, in \cite{allard1974characterization}, Allard showed that the monotonicity formula holds only for linear transformations of the area functional. The lack of a monotonicity formula for general anisotropic functionals gives rise to numerous technical issues in the theory, since the majority of the isotropic results \emph{deeply} rely on it. 

De Philippis, De Rosa and Ghiraldin proved in \cite{de2018rectifiability} that, if $\cF$ is of class $C^{1}$ and satisfies the so called \emph{atomic condition (AC)}, the rectifiability criterium \ref{R} holds also for the anisotropic first variation $\delta_\cF$ in place of $\delta$. This result found applications, among others, in the solution of the anisotropic Plateau problem \cite{de2020existence,DePDeR} and the anisotropic min-max theory \cite{DeR}. In the case of an autonomous anisotropy $\cF$, i.e., $\cF$ does not depend on the variable in $\R^{m+n}$, the authors in \cite{de2018rectifiability} showed that the validity of \ref{R} is actually equivalent to AC. 
We refer the interested reader to the following works for further developments of the theory: \cite{harrison2017general,de2006lecture,de2020existence,DRL,tione,de2019direct}.
In codimension $n=1$ and in dimension $m=1$, AC is equivalent to \emph{strict convexity} of $\cF$. In \cite{de2020equivalence}, De Rosa and Kolansinski have proven that the atomic condition implies the Almgren's strict ellipticity condition. We refer the reader to the following works about this type of functionals in higher codimension, where basic questions remain open to date: \cite{paiva2004volumes,busemann1962convex,busemann1963convex}.

Several important regularity theorems have been obtained for anisotropic minimizers. In particular, Almgren \cite{Alm3} proved regularity for sets minimizing an elliptic anisotropic energy in any dimension and codimension; Duzaar and Steffen, \cite{duzaar2002optimal}, exhibited how to obtain interior and boundary regularity for integer rectifiable currents in any dimension and codimension that almost minimize an elliptic anisotropic energy. Schoen, Simon and Almgren \cite{SSA} proved that, in codimension $1$, anisotropic energy minimizers in the sense of currents have singular set of Hausdorff codimension at least $2$; De Philippis and Maggi in \cite{philippis2015regularity} proved regularity for free boundary Caccioppoli sets that minimize an elliptic anisotropic energy. Figalli in \cite{figalli2017regularity} focused on the proof of regularity for almost minimal integral rectifiable currents, in codimension $1$ and with density $1$, under weak conditions on the anisotropic functional: namely $C^{1,1}$ anisotropies rather than the usual $C^2$ assumption. We also refer the reader to \cite{Hardt,LinPhD,DDH} for the boundary regularity of anisotropic energy (almost) minimizers and stable surfaces.

However, the regularity theory of stationary points for anisotropic integrands is much less understood, due to the number of nontrivial difficulties caused by the lack of a monotonicity formula and of mass ratio bounds. 
For codimension $1$ varifolds, Allard proved regularity under a \emph{density lower bound assumption} \cite[The basic regularity Lemma, Assumption (1)]{ARCATA}.
De Lellis, De Philippis, Kirchheim, and Tione presented in an expository fashion several open questions in the theory, see \cite{de2019geometric}. 
To the best of our knowledge, for codimension bigger than or equal to $2$, the only regularity result for varifolds that are stationary for an anisotropic energy is proved by De Rosa and Tione in \cite{de2020regularity} for varifolds induced by Lipschitz graphs, provided the anisotropic integrand satisfies the uniformly scalar atomic condition (USAC) introduced in \cite[Definition 3.3]{de2020regularity}, c.f. \cref{D:USAC}.

\subsection{Main result}

The aim of this work is to prove the anisotropic counterpart of Allard's boundary regularity theorem \cite{AllB}. To this aim, we will consider anisotropic integrands satisfying (USAC), c.f. \cref{D:USAC}. Our main result is the following. For a more precise and detailed statement, we refer to \cref{T:regularity}.
\begin{result}
Let $m,n\geq 2$, $\cF$ be an integrand of class $C^2$ satisfying USAC, $\Gamma\subset \R^{m+n}$ be an $(m-1)$-submanifold of class $C^{1,\alpha}$, $\Omega \subset \R^m$, $u\in Lip(\Omega,\R^n)$, and $\partial \mathrm{graph}(u) = \Gamma$. Assume that the anisotropic mean curvature of $u$ is in $\mathrm{L}^p$ for $p>m$. Then there exist three constants $\sigma>0$, $\beta \in (0,1)$ and $\eta \in(0,1)$ depending only on $m,n,p,\cF, \|u\|_{\mathrm{Lip}}, \Gamma$, with the following property. If $x\in\Gamma$ and $r_0>0$ are such that
$$\frac{\|\mathrm{graph}(u)\|(\ball{x}{r})}{\omega_m r^m} \leq \frac12 + \sigma \qquad \forall r\in (0,r_0),$$
then
\begin{equation*}
     u\in C^{1,\eta}(\bball{x}{\beta r_0}).
\end{equation*}
\end{result}

%

\section{Notation and preliminaries}\label{basic-concepts}

We fix integers $m,n\geq 1$ and denote $\R_+ := \{t\in\R: t\geq 0\}$. We denote by $U$ an open subset of $\R^{m+n}, \ball{x}{r}:=\{y\in\R^{m+n}:|x-y|<r\}, \oball{r}:= \ball{0}{r}$. If $\pi$ is a linear subspace of $\R^{m+n}$, we denote $\tiltball{\pi}{x}{r}:= \ball{x}{r}\cap (x+\pi),$ and we also denote $\bp_{\pi}$ the orthogonal projection from $\R^{m+n}$ onto $\pi$. When $\pi = \R^m\times \{0\}, $ we omit $\pi$ in the preceding notations.

For $s\geq 0$, $\cH^s$ denotes the $s$-dimensional Hausdorff measure induced by the Euclidean metric in $\R^{m+n}$, and $\omega_s:=\cH^s(\tiltball{\pi}{0}{1})$ where $\pi$ is an $s$-dimensional subspace. We denote the inner product of vectors by $\left\langle , \right\rangle : \R^{m+n}\times\R^{m+n}\to\R,$ the product of matrices by $\cdot$ where to any $A = ( a_{ij} )_{i=1,\ldots,h}^{j=1,\ldots,r}$ and $B = ( b_{ij} )_{i=1,\ldots,r}^{j=1,\ldots,s}$ it assigns $A\cdot B = \left( \sum_{k=1}^{r}a_{ik}b_{kj} \right)_{i=1,\ldots,h}^{j=1,\ldots,s},$ and $A:B = \mathrm{tr}(A^t\cdot B).$

For the basic theory that we will assume, we refer the reader to \cite{Fed}, \cite{simon2014introduction}, \cite{All}, and the references therein.

\subsection{Measures, rectifiability and  Grassmannian}

We denote by $\fM(U, \R^{m})$ the set of $\R^m$-valued Radon measures on $U$, when $m=1$, we denote with $\fM_+(U)$ the set of nonnegative Radon measures on $U$. Given $\mu\in\fM(U, \R^m)$, we set:
\begin{itemize}
    \item for a Borel set $A\subset U$, $\mu\res A ( E):= \mu(E\cap A)$ as the \emph{restriction of $\mu$ to $A$};
    
    \item $\|\mu \|\in\fM_+(U)$ to be the \emph{total variation of $\mu$}. Recall that, for any open set $A\subset U$, 
    \begin{equation*} \|\mu\|(A) :=  \sup\left\{ \int\left<g(x),\diff\mu(x)\right>: g\in C_c^{\infty}(A,\R^m), \|g\|_{\infty}\leq 1 \right\},\end{equation*}
    where $\left<g(x),\diff\mu(x)\right>:= \sum_{i=1}^{m}g_i(x)\mathrm{d}\mu_i(x)$;
    
    \item the \emph{upper and lower $s$-dimensional density of $\mu$ at $x$}, respectively, as
    \begin{equation*} \udens{s}{\mu}{x} := \limsup_{r\to 0^+} \frac{\|\mu\|(\ball{x}{r})}{\cH^s(\ball{x}{r})}, \quad \ldens{s}{\mu}{x} := \liminf_{r\to 0^+} \frac{\|\mu\|(\ball{x}{r})}{\cH^s(\ball{p}{r})}.\end{equation*}
    In case $\udens{s}{\mu}{x} = \ldens{s}{\mu}{x},$ we call this number the \emph{density of $\mu$ at $x$} and denote it by $\dens{s}{\mu}{x}$;
    
    \item for a Borel function $g:U\to \R^n$, the \emph{push-forward of $\mu$ through $g$} as $g_{\sharp}\mu = \mu\circ g^{-1}$.
\end{itemize}

Let $M\subset U\subset\R^{m+n}$, we say that $M$ is \emph{$s$-rectifiable} if there exist a sequence of Lipschitz maps $\{g_j:\R^s\to  U\}_{j=1}^{+\infty}$ and an $\cH^s$-null set $M_0$ such that
\begin{equation*} M = M_0 \cup\left(\bigcup_{j=1}^{+\infty}g_j(M_j)\right). \end{equation*}

In \cite[Lemma 1.2, Chapter 3]{simon2014introduction}, it is shown that $M$ is $s$-rectifiable if, and only if, $M$ can be covered, up to a $\cH^s$-null set, by countably many $s$-dimensional submanifolds of $U$ of class $C^1$. A nonnegative Radon measure $\mu\in\fM_+(U)$ is said to be \emph{$s$-rectifiable}, if there is an $s$-rectifiable set $M\subset U$ and a nonnegative Borel function $\Theta: U\to\R_+$ such that $\mu = \Theta\cH^s\res M$.

The Grassmannian of $s$-dimensional linear subspaces of $\R^{m+n}$ is denoted by $\Gr(m+n,s)$, we will often call $\pi\in\Gr(m+n,s)$ as an $s$-plane in $\R^{m+n}$. We endow $\Gr(m+n,s)$ with the metric
\begin{equation*}\|\pi - \tilde \pi\| := \sqrt{\sum_{i,j=1}^{m+n}\left(\left<\be_i, \bp_\pi(\be_j)\right> - \left<\be_i, \bp_{\tilde \pi}(\be_j)\right> \right)^2 },\quad \forall \pi, {\tilde \pi} \in\Gr(m+n,s),\end{equation*}
where $\bp_\pi$ and $\bp_{\tilde \pi}$ denote the orthogonal projections of $\R^{m+n}$ on $\pi$ and ${\tilde \pi}$, respectively, and $\{\be_i\}_{i=1}^{m+n}$ is the canonical orthonormal basis of $\R^{m+n}$. We also fix the notation 
\begin{equation*} \Gr(A,m+n,s) := A\times\Gr(m+n,s), \quad \forall A\subset U\subset \R^{m+n},\end{equation*}
and $\Gr(A):=  \Gr(A,m+n,m)$.

\subsection{Varifolds}

We say that $\bV$ is an \emph{$m$-varifold on $U$} if $\bV$ is a nonnegative Radon measure defined on $\Gr(U)$. The space of all $m$-varifolds on $U$ is denoted by $\V_m(U)$. For every $\bV\in \V_m(U)$ we can define the measure $\|\bV\|\in\fM_+(U)$, which is often called \emph{weight of $\bV$}, by the relation
\begin{equation*} \|\bV\|(A) = \bV(\proj^{-1}(A)), \quad \forall A\subset U,\end{equation*}
where henceforth $\proj$ denote the canonical projection of $\Gr(U)$ on $U$. Hence, we define  
\begin{equation*} \udens{m}{\bV}{x} :=  \udens{m}{\|\bV\|}{x}, \qquad \ldens{m}{\bV}{x} := \ldens{m}{\|\bV\|}{x}, \end{equation*}
and, when $\dens{m}{\|\bV\|}{x}$ exists,
$$\dens{m}{\bV}{x} :=  \dens{m}{\|\bV\|}{x}.$$

Of particular interest are rectifiable varifolds, which enjoy a richer structure than general varifolds, see \cite[Chapter 4 and 9]{simon2014introduction}. In fact, we say that $\bV\in\V_m(U)$ is an \emph{$m$-rectifiable varifold} if, there exists an $m$-rectifiable set $M$ in $U$ and a positive locally $\cH^m$-integrable function $\Theta$ on $M$ with $\Theta \equiv 0$ on $\R^{n}\setminus M$ such that 
\begin{equation*} \bV(A) = \int_{\proj(A)\cap M}\Theta(y)\diff\cH^m(y), \quad \forall A\subset \Gr(U). \end{equation*}

In this case, we use the notation $\bV = \bv(M,\Theta)$. For every diffeomorphism $\psi \in C^1_c(U,\R^{m+n})$, the push-forward $\psi^\#\bV\in \V_m(U)$ of $\bV\in\V_m(U)$ with respect to $\psi$ is defined as
$$\int_{\Gr(U)}\Phi(x,\pi)d(\psi^\#\bV)(x,\pi)=\int_{\Gr(U)}\Phi(\psi(x),d_x\psi(\pi))J\psi(x,\pi) d\bV(x,\pi), \; \forall \Phi\in C^0_c(\Gr(U)).$$

Here $d_x\psi(\pi)$ denotes the image of $\pi$ under the map $d_x\psi(x)$ and 
\[
J\psi(x,\pi):=\sqrt{\det\Big(\big(d_x\psi\big|_\pi\big)^*\circ d_x\psi\big|_\pi\Big)}
 \]
is the $m$-Jacobian determinant of the differential $d_x\psi$ restricted to $\pi$, see \cite[Chapter 8]{simon2014introduction}. 

We consider an \emph{anisotropic integrand} to be a $C^1$ function $\cF: \Gr(U)  \to  (0,+\infty)$ and we define the anisotropic energy of $\bV$ with respect to the anisotropic integrand $\cF$ in $A$ as 
\begin{equation*} \cE_{\bV}(A) := \int_{\Gr(A)}\cF(y,\pi)\diff\!\bV(y,\pi).\end{equation*}

Note that the area integrand is recovered when we consider $\cF \equiv 1$.

We define the notion of \emph{anisotropic first variation} or \emph{$\cF$-first variation} of an $m$-varifold $\bV$ as the distribution that acts on each $g\in C^{1}_c(U, \R^n)$ as follows
\begin{equation*} \delta_{\cF} \bV(g) := \frac{\mathrm{d}}{\mathrm{d} t} \cE_{(\phi_{t}^{\#}\bV)}(U)\biggl|\biggr._{t=0},\end{equation*}
where $\phi_t(x):= x + tg(x)$. If $\delta_{\cF}\bV \equiv 0$, we say that $\bV$ is \emph{anisotropically stationary} or \emph{$\cF$-stationary}.

We recall the following formula for the anisotropic first variation of a varifold:
\begin{proposition}[Lemma A.2, \cite{de2018rectifiability}]\label{P:1var-int}
Let $\cF \in C^{1}(\Gr(U))$ and $\bV\in \V_m(U)$, then for every $g \in C_{c}^{1}\left(U, \R^{m+n}\right)$ we have
\begin{equation*}
\delta_{\cF} \bV(g)=\int_{\Gr(U)}\biggl[\left\langle D_{x} \cF(x, \pi), g(x)\right\rangle+\cB_{\cF}(x, \pi): Dg(x)\biggr]\mathrm{d}\bV(x,\pi),
\end{equation*}
where the matrix $\cB_{\cF}(x, \pi) \in \R^{m+n} \otimes \R^{m+n}$ is uniquely defined by
\begin{equation}\label{eq1}
\begin{aligned}
\cB_{\cF}(x, \pi): L := \cF(x, \pi)(\pi: L) +\left\langle D_{\pi} \cF(x, \pi), \pi^{\perp} \circ L \circ \pi +\left(\pi^{\perp} \circ L \circ \pi\right)^{*}\right\rangle,
\end{aligned}
\end{equation}
for all $L \in \R^{m+n} \otimes \R^{m+n}.$
\end{proposition}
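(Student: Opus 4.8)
The plan is to differentiate the definition of $\delta_{\cF}\bV(g)$ directly, reducing everything to two pointwise first‑order computations. Since $g\in C^{1}_{c}(U,\R^{m+n})$, for $|t|$ small the map $\phi_{t}(x)=x+tg(x)$ is a $C^{1}$ diffeomorphism of $U$ equal to the identity off $\spt g$, so the change‑of‑variables formula for push‑forwards recalled above (with $\psi=\phi_{t}$, $\Phi=\cF$) gives
\begin{equation*}
\cE_{(\phi_{t}^{\#}\bV)}(U)=\int_{\Gr(U)}\cF\big(\phi_{t}(x),\,d_{x}\phi_{t}(\pi)\big)\,J\phi_{t}(x,\pi)\,\diff\bV(x,\pi).
\end{equation*}
The integrand is $C^{1}$ in $t$, its $t$‑derivative is bounded uniformly for $(x,\pi)\in\proj^{-1}(\spt g)$ and $t$ near $0$, and it vanishes off $\proj^{-1}(\spt g)$, a set of finite $\bV$‑measure (equal to $\|\bV\|(\spt g)$); hence one may differentiate under the integral sign at $t=0$. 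It thus suffices to show that the $t=0$ derivative of the integrand equals $\langle D_{x}\cF(x,\pi),g(x)\rangle+\cB_{\cF}(x,\pi):Dg(x)$. Note first that \eqref{eq1} determines $\cB_{\cF}(x,\pi)$ uniquely, since its right‑hand side is linear in $L$ and $L\mapsto\cB_{\cF}(x,\pi):L=\mathrm{tr}\big(\cB_{\cF}(x,\pi)^{t}\cdot L\big)$ is a nondegenerate pairing on $\R^{m+n}\otimes\R^{m+n}$.

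I would then compute the three first‑order contributions at $t=0$, using $\phi_{0}=\mathrm{Id}$, $\tfrac{d}{dt}\phi_{t}|_{t=0}=g$, $d_{x}\phi_{t}=\mathrm{Id}+tDg(x)$ and $J\phi_{0}(x,\pi)=1$. First, differentiating the base‑point slot of $\cF$ produces $\langle D_{x}\cF(x,\pi),g(x)\rangle$. Second, writing $d_{x}\phi_{t}|_{\pi}=(\mathrm{Id}+tDg(x))\circ\iota_{\pi}$ with $\iota_{\pi}\colon\pi\hookrightarrow\R^{m+n}$ the inclusion, one has $(d_{x}\phi_{t}|_{\pi})^{*}\circ d_{x}\phi_{t}|_{\pi}=\mathrm{Id}_{\pi}+t\,\iota_{\pi}^{*}\big(Dg(x)+Dg(x)^{t}\big)\iota_{\pi}+O(t^{2})$, so that, using $\mathrm{tr}\big(\iota_{\pi}^{*}M\iota_{\pi}\big)=\pi:M$ and $\pi:Dg(x)^{t}=\pi:Dg(x)$,
\begin{equation*}
J\phi_{t}(x,\pi)=1+\tfrac{t}{2}\,\pi:\big(Dg(x)+Dg(x)^{t}\big)+O(t^{2})=1+t\,\big(\pi:Dg(x)\big)+O(t^{2}),
\end{equation*}
and the Jacobian contributes $\cF(x,\pi)\,\big(\pi:Dg(x)\big)$. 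Third, for the plane slot the curve $t\mapsto d_{x}\phi_{t}(\pi)=(\mathrm{Id}+tDg(x))\pi$ in $\Gr(m+n,m)$, viewed through the standard embedding $\tau\mapsto\bp_{\tau}$ into the symmetric endomorphisms of $\R^{m+n}$, has velocity at $t=0$ equal to $\pi^{\perp}\circ Dg(x)\circ\pi+\big(\pi^{\perp}\circ Dg(x)\circ\pi\big)^{*}$. This is obtained by differentiating the relations $\bp_{\tau(t)}^{2}=\bp_{\tau(t)}$, which force the velocity to interchange $\pi$ and $\pi^{\perp}$, and then computing its $\pi\to\pi^{\perp}$ block from the first‑order graph representation $\tau(t)=\{v+tMv:v\in\pi\}$ with $M=\pi^{\perp}\circ Dg(x)\circ\iota_{\pi}+O(t)$. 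Pairing this velocity with $D_{\pi}\cF(x,\pi)$ (the gradient of $\cF$ in its second argument for this embedding) yields $\big\langle D_{\pi}\cF(x,\pi),\,\pi^{\perp}\circ Dg(x)\circ\pi+(\pi^{\perp}\circ Dg(x)\circ\pi)^{*}\big\rangle$.

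By the product rule, the $t=0$ derivative of the integrand is the sum of these three terms, and comparing the last two with \eqref{eq1} for $L=Dg(x)$ shows that their sum is exactly $\cB_{\cF}(x,\pi):Dg(x)$; adding the first term and integrating over $\Gr(U)$ gives the asserted identity. I expect the main obstacle to be the third contribution: one must fix the precise convention under which $D_{\pi}\cF$ acts on tangent vectors of the Grassmannian and match it with the infinitesimal motion of $(\mathrm{Id}+tL)\pi$, keeping careful track of the orthogonal projections so that only the $\pi^{\perp}$‑into‑$\pi$ block survives and the factors of $\tfrac12$ cancel correctly; the remaining analytic point, namely differentiating under the integral, is routine given the $C^{1}$ regularity of $\cF$ and $g$ and the compact support of $g$.
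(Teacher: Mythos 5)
Your argument is correct: differentiating $t\mapsto\int_{\Gr(U)}\cF(\phi_{t}(x),d_{x}\phi_{t}(\pi))J\phi_{t}(x,\pi)\,\mathrm{d}\bV$ under the integral and collecting the base-point term, the Jacobian expansion $J\phi_{t}=1+t\,(\pi:Dg)+O(t^{2})$, and the Grassmannian velocity $\pi^{\perp}\circ Dg\circ\pi+(\pi^{\perp}\circ Dg\circ\pi)^{*}$ reproduces exactly the right-hand side of \eqref{eq1} with $L=Dg(x)$. The paper does not prove this statement but imports it from \cite{de2018rectifiability} (Lemma A.2), and your computation is essentially the standard proof given there, so there is nothing to add beyond noting that the interchange of derivative and integral is justified, as you say, by the compact support of $g$ and the $C^{1}$ regularity of $\cF$.
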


If we assume that $\delta_{\cF}\bV$ is a Radon measure on $\oball{r_0}\setminus \Gamma$, there exists a $\|\bV\|$-measurable function $\cH_{\cF}:\oball{r_0}\setminus \Gamma\to\R^{m+n}$ called either \emph{anisotropic mean curvature vector} or \emph{$\cF$-mean curvature vector} such that 
\begin{align}
\delta_{\cF}\bV(g) &= - \int_{\oball{r_0}\setminus \Gamma} \left\langle \cH_{\cF},g\right\rangle\mathrm{d}\|\bV\|, \quad \forall g\in C^{1}(\oball{r_0}) \text{ s.t. } g|_{\Gamma} \equiv 0, \label{E:mean-curv-def}\\
|\cH_{\cF}(x)| &= D_{\|\bV\|}\|\delta_{\cF}\bV\|(x), \quad \forall x\in\oball{r_0}\setminus \Gamma,\nonumber
\end{align}
where $D_{\|\bV\|}\|\delta_{\cF}\bV\|$ denoted the Radon-Nykodim derivative. 

\subsection{Assumptions on the anisotropic integrand}

As we briefly mentioned in the introduction, there are several ellipticity conditions which one might impose on $\cF$. We refer the reader to the references in Section \ref{Sec2}. 
We will just recall the ellipticity condition that we will use in this paper, i.e. the \emph{uniformly scalar atomic condition}, introduced in \cite[Definition 3.3]{de2020regularity}.

To this aim, we denote the dual function of $\cF$ by $\cF^*$ which is defined on $\Gr(U,m+n,n)$ as $\cF^*(x, \pi) := \cF(x, \pi^{\perp})$.

\begin{definition}[Uniformly scalar atomic condition]\label{D:USAC}
Given an anisotropic integrand $\cF \in C^{1}(\Gr(U))$, $\cF$ satisfies the uniformly scalar atomic condition (USAC) if for every $x\in U$ there exists a constant $K_{\cF,x}>0$ such that
\begin{equation*}
\cB_{\cF}(x,\pi_0) : \cB_{\cF^{*}}(x,\pi_1^{\perp}) \geq K_{\cF,x}\|{\pi_0} - {\pi_1}\|^2, \quad  \forall\pi_0, \pi_1\in\Gr(m+n,m).
\end{equation*}
\end{definition}

\begin{remark}\label{rem:rectifiability0}
We recall that De Rosa and Tione proved in \cite[Proposition 3.5]{de2020regularity} that USAC implies the so-called atomic condition. The atomic condition was in turn introduced in \cite[Definition 1.1]{de2018rectifiability} to prove the Rectifiability Theorem (\ref{R} with respect to the anisotropic first variation $\delta_{\cF}$). Hence, the Rectifiability Theorem \ref{R} holds assuming that the anisotropic integrand satisfies USAC.
\end{remark}

\section{Anisotropic first variation at boundary points}

We isolate here the assumptions under which we work in this section. 

\begin{assumption}\label{assump:general}
We set the boundary, varifold and anisotropy assumptions as follows:
\begin{enumerate}
    \item[]\label{assump:bdr} \textbf{(Boundary)} Let $\Gamma$ be a closed $(m-1)$-dimensional submanifold of class $C^{1,\alpha}$ for some $\alpha\in (0,1]$. Assume that $0\in \Gamma$, the radius $r_0>0$ is such that $\Gamma\cap\oball{r_0}$ is a graph of a $C^{1,\alpha}$ function over $T_0\Gamma$ and $\kappa\geq 0$ is a constant which satisfies
    \begin{equation}\label{E:bdr_assump} |\bp_{N_x\Gamma}(x-y)|\leq\kappa|x-y|^{1+\alpha}, \quad \| \bp_{N_x\Gamma} - \bp_{N_y\Gamma}\|\leq \kappa |x-y|^{\alpha} \quad \text{, and } \quad c\kappa r_0^{\alpha}<\frac{1}{2},\end{equation}
    for all $x,y\in \Gamma\cap \oball{r_0}$, where we use the notation $T_x\Gamma$ and $N_x\Gamma$ for the tangent and normal space to $\Gamma$ at $x$, respectively;
    
    \item[]\label{assump:var} \textbf{(Varifold)} Let $\bV\in \V_m(\oball{r_0})$ satisfying $0\in\spt(\bV)$ and $\Theta(x)\geq 1$ for $\|\bV\|$-almost every $x\in\oball{r_0}.$ We assume that $\delta_{\cF}\bV$ is a Radon measure when restricted to $\oball{r_0}\setminus \Gamma$, and the $\cF$-mean curvature $\cH_{\cF}$ of $\bV$ belongs to $\mathrm{L}^{1}\left(\oball{r_0}\setminus \Gamma, \bV\right)$;
    
    \item[]\label{assump:ani} \textbf{(Anisotropy)} Let $\cF\in C^{1}(\Gr(\oball{r_0}))$.
\end{enumerate}
\end{assumption}

\subsection{A good distance function}

If $\Gamma$ were of class $C^{1,1}$ we would have that $\Gamma$ has strictly positive reach and the distance function $\dist(x,\Gamma)$ is differentiable (not necessarily of class $C^1$) in a tubular neighborhood of thickness of the reach. However, for a $C^{1,\alpha}$ boundary $\Gamma$, the distance function is not necessarily differentiable and thus we need to ``smoothen it". Bourni in \cite[Section 3]{Theodora} showed how to properly construct this smooth distance function and we briefly recall the main properties that we are going to use in our work.  

Following the scheme of \cite[Definition 5.3.2 and 5.3.9]{krantz1999geometry}, let $\cW$ be a Whitney decomposition of $\oball{r_0}\setminus \Gamma$ into nontrivial closed $(m+n)$-cubes such that, for every $C\in\cW,$ we have that

\begin{equation*}\diam(C) \leq \dist(C,\Gamma) \leq 3\diam(C).\end{equation*}

We will fix the following notations: $x_C$ is the center of the cube $C,$ $p_C$ is a point in $\Gamma$ that satisfies $|x_C - p_C|=\dist(x_C,\Gamma)$ and $\{\varphi_C\}_{C\in\cW}$ is a Whitney partition of the unity associated to $\cW$ as in \cite[Definition 5.3.9]{krantz1999geometry} such that
\begin{equation}\label{E:grad_part_unity}
|D\varphi_C(x)|\leq \frac{c}{\dist(x, \Gamma)},
\end{equation}
where $c\geq 2$ is a dimensional constant. Since by construction $\sum_{C\in\cW}\varphi_C \equiv 1,$ and for every $x$ there exists $C_x\in\cW$ such that $\varphi_{C_x}(x)>0$, therefore
\begin{equation}\label{E:partunit_quadrado} \sum_{C\in\cW} \varphi^2_C(x) \geq C(m,n, r_0) > 0.
\end{equation}

We recall the following lemma:

\begin{lemma}[\cite{Theodora}]\label{Lgood-dist}
If we assume that $c\kappa r_0^{\alpha} < 1/2$, there exists $\rho: \oball{r_0} \to \R_+$ such that 
\begin{enumerate}[\upshape (i)]
    \item $\rho$ is a positive function of class $C^1$ with $|D\rho(x)|\leq 1 + c\kappa \rho(x)^{\alpha}$;
    
    \item the following equality holds
    \begin{equation*}\rho(x)D\rho(x) = \sum_{C\in\cW}\varphi_C(x)\bp_{N_{p_C}\Gamma}(x-p_C) + Y(x),\end{equation*}
    where $|Y(x)|\leq c\kappa\dist(x,\Gamma)^{1+\alpha} \leq c\kappa \rho(x)^{1+\alpha}$;
    
    \item we have that
    \begin{equation*}\frac{\dist(x, \Gamma)}{2} \leq\left(1-c \kappa \dist(x, \Gamma)^{\alpha}\right) \dist(x, \Gamma) \leq \rho(x) \leq\left(1+c \kappa \dist(x, \Gamma)^{\alpha}\right) \dist(x, \Gamma) \leq \frac{3 \dist(x, \Gamma)}{2}.\end{equation*}
\end{enumerate}
\end{lemma}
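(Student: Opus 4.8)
The plan is to construct the regularized distance function directly, following \cite[Section~3]{Theodora}. Set
\[
  \rho(x)^2 := \sum_{C\in\cW}\varphi_C(x)\,\bigl|\bp_{N_{p_C}\Gamma}(x-p_C)\bigr|^2 \qquad\text{for } x\in\oball{r_0}\setminus\Gamma,
\]
and $\rho\equiv 0$ on $\Gamma$. Since $\{\varphi_C\}$ is a locally finite partition of unity subordinate to the Whitney decomposition and each $x\mapsto|\bp_{N_{p_C}\Gamma}(x-p_C)|^2$ is a quadratic polynomial, $\rho^2$ is of class $C^1$ on $\oball{r_0}\setminus\Gamma$; once (iii) is established it gives $\rho^2\ge \dist(x,\Gamma)^2/4>0$ there, so $\rho=\sqrt{\rho^2}$ is $C^1$ and positive on $\oball{r_0}\setminus\Gamma$, and the upper bound in (iii) gives $\rho(x)\le \tfrac32\dist(x,\Gamma)\to0$ as $x\to\Gamma$, so $\rho$ is continuous on $\oball{r_0}$. (Shrinking $r_0$ if necessary, every $x\in\oball{r_0}$ has a foot point $q\in\Gamma$ to which the bounds \eqref{E:bdr_assump} apply.)

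Everything rests on one pointwise estimate. Fix $x\in\oball{r_0}\setminus\Gamma$, write $d:=\dist(x,\Gamma)$, and let $q\in\Gamma$ be a foot point of $x$, so $|x-q|=d$ and $x-q\in N_q\Gamma$. If $\varphi_C(x)>0$, then $x$ lies in a fixed dilate of the cube $C$, so by the Whitney property $\diam(C)\le c\,d$, $d\le c\,\diam(C)$, $|x-x_C|\le c\,\diam(C)$ and $|x_C-p_C|=\dist(x_C,\Gamma)\le c\,\diam(C)$; hence $|x-p_C|\le c\,d$ and $|q-p_C|\le|q-x|+|x-p_C|\le c\,d$. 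Now decompose
\[
  \bp_{N_{p_C}\Gamma}(x-p_C)=\bp_{N_{p_C}\Gamma}(x-q)+\bp_{N_{p_C}\Gamma}(q-p_C).
\]
Since $x-q\in N_q\Gamma$, the first term equals $(x-q)+(\bp_{N_{p_C}\Gamma}-\bp_{N_q\Gamma})(x-q)$, and by the second inequality in \eqref{E:bdr_assump} its deviation from $x-q$ has norm at most $\kappa|p_C-q|^{\alpha}d\le c\,\kappa\,d^{1+\alpha}$; by the first inequality in \eqref{E:bdr_assump} (at $p_C$, with $q\in\Gamma$) the second term has norm at most $\kappa|q-p_C|^{1+\alpha}\le c\,\kappa\,d^{1+\alpha}$. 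Thus $\bigl|\bp_{N_{p_C}\Gamma}(x-p_C)-(x-q)\bigr|\le c\,\kappa\,d^{1+\alpha}$, and squaring (and using $c\kappa d^{\alpha}<1/2$) gives $\bigl|\,|\bp_{N_{p_C}\Gamma}(x-p_C)|^2-d^2\,\bigr|\le c\,\kappa\,d^{2+\alpha}$ for every $C$ with $\varphi_C(x)>0$. Averaging against $\varphi_C$ (which sum to $1$) yields $|\rho(x)^2-d^2|\le c\,\kappa\,d^{2+\alpha}$, which after taking square roots is exactly (iii).

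For (ii), differentiate: using that $\bp_{N_{p_C}\Gamma}$ is a symmetric projection, $D_x|\bp_{N_{p_C}\Gamma}(x-p_C)|^2=2\,\bp_{N_{p_C}\Gamma}(x-p_C)$, so from $2\rho D\rho=D(\rho^2)$ we get
\[
  \rho(x)D\rho(x)=\sum_{C\in\cW}\varphi_C(x)\,\bp_{N_{p_C}\Gamma}(x-p_C)+Y(x),\qquad Y(x):=\tfrac12\sum_{C\in\cW}D\varphi_C(x)\,\bigl|\bp_{N_{p_C}\Gamma}(x-p_C)\bigr|^2,
\]
which is the identity in (ii). Since $\sum_C\varphi_C\equiv1$ we have $\sum_C D\varphi_C\equiv0$, hence $Y(x)=\tfrac12\sum_C D\varphi_C(x)\bigl(|\bp_{N_{p_C}\Gamma}(x-p_C)|^2-d^2\bigr)$; combining the estimate above with $|D\varphi_C(x)|\le c/d$ from \eqref{E:grad_part_unity} and the dimensionally bounded number of cubes with $\varphi_C(x)>0$ gives $|Y(x)|\le c\,\kappa\,d^{1+\alpha}\le c\,\kappa\,\rho(x)^{1+\alpha}$ by (iii). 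Finally, (i) follows from (ii): using $|\bp_{N_{p_C}\Gamma}(x-p_C)|\le d+c\kappa d^{1+\alpha}$, the bound on $Y$, and $\rho(x)\ge d(1-c\kappa d^{\alpha})\ge d/2$,
\[
  |D\rho(x)|\le\frac{1}{\rho(x)}\Bigl(\sum_C\varphi_C(x)\,|\bp_{N_{p_C}\Gamma}(x-p_C)|+|Y(x)|\Bigr)\le\frac{1+c\kappa d^{\alpha}}{1-c\kappa d^{\alpha}}\le 1+c\,\kappa\,d^{\alpha}\le 1+c\,\kappa\,\rho(x)^{\alpha},
\]
where we used $c\kappa r_0^{\alpha}<1/2$ together with $\tfrac{1+t}{1-t}\le1+4t$ on $[0,\tfrac12]$ and then (iii). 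The only genuinely delicate point is the quantitative comparison $\bigl|\,|\bp_{N_{p_C}\Gamma}(x-p_C)|^2-\dist(x,\Gamma)^2\,\bigr|\lesssim\kappa\,\dist(x,\Gamma)^{2+\alpha}$, where the $C^{1,\alpha}$ hypothesis on $\Gamma$ (through both inequalities of \eqref{E:bdr_assump}) must be played against the Whitney geometry to control simultaneously the distance from $p_C$ to the true foot point $q$ and the tilt $\|\bp_{N_{p_C}\Gamma}-\bp_{N_q\Gamma}\|$; the rest is bookkeeping of dimensional constants, absorbed into the constant $c$ of the statement.
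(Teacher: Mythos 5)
Your proposal is correct, and it reconstructs exactly the Bourni construction that the paper cites without reproducing: setting $\rho^2 := \sum_{C}\varphi_C\,|\bp_{N_{p_C}\Gamma}(\cdot-p_C)|^2$, proving the pointwise comparison with $\dist(\cdot,\Gamma)^2$ via \eqref{E:bdr_assump} and the Whitney geometry, and then deriving (ii) by differentiation and (i) from (ii) and (iii). Since the paper states \cref{Lgood-dist} as a black box from \cite{Theodora}, there is no internal proof to compare against, but your argument is the standard one and the estimates (including the use of $\sum_C D\varphi_C\equiv0$ to subtract $d^2$ in $Y$, and of $c\kappa r_0^\alpha<1/2$ to control $\tfrac{1+t}{1-t}$) all check out.
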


\begin{remark}\label{rem:whitney_barB}
Notice that, the constructions in this subsection do work if we replace $\Gamma$ by any $k$-manifold of class $C^{1,\alpha}$ with $k<m+n$.
\end{remark}

\subsection{First variation formula}

We state the formula for the anisotropic first variation at boundary points in the following proposition. First, following Allard's framework, we show that, under \cref{assump:general}, the anisotropic first variation is a Radon measure in the whole ball $\oball{r_0}$, i.e., including the boundary $\Gamma$.

\begin{proposition}\label{P:1var-bdr}
Under \cref{assump:general}, $\delta_{\cF}\bV$ is a Radon measure on $\oball{r_0}$. Moreover, there exists a $\|\delta_{\cF}\bV\|$-measurable function $\cN_{\cF}$ defined on $\Gamma$ such that $\cN_{\cF}(p)\in N_p \Gamma, \forall p\in \Gamma,$ and 
\begin{equation*} \delta_{\cF}\bV(g) = - \int_{\oball{r_0}\setminus \Gamma} \left\langle  \cH_{\cF},g\right\rangle\mathrm{d}\|\bV\| + \int_{\Gamma} \left\langle  \cN_{\cF},g\right\rangle\mathrm{d}\|\delta_{\cF}\bV\|_{\textup{sing}}, \quad \forall g\in C^{1}(\oball{r_0}).\end{equation*}
\end{proposition}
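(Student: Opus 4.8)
The plan is to follow Allard's scheme from \cite{AllB} (and its $C^{1,\alpha}$ adaptation by Bourni \cite{Theodora}), using the smooth distance-like function $\rho$ from \cref{Lgood-dist} as the key device to ``push test vector fields off the boundary''. The strategy is: given an arbitrary $g\in C^1(\oball{r_0},\R^{m+n})$, I cannot directly test $\delta_\cF\bV$ against $g$ because \eqref{E:mean-curv-def} only applies to fields vanishing on $\Gamma$; instead I will decompose $g$ near $\Gamma$ into a part that vanishes on $\Gamma$ (controlled by $\cH_\cF\in\mathrm{L}^1$) plus an error, and show that the functional $g\mapsto \delta_\cF\bV(g)$ extends to a bounded linear functional on $C^0_c(\oball{r_0})$, hence is represented by a Radon measure. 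The Riesz representation theorem then produces $\|\delta_\cF\bV\|$ and a density; decomposing with respect to $\|\bV\|$ via Radon--Nikodym isolates the absolutely continuous part (which gives the $-\int\langle\cH_\cF,g\rangle\,\diff\|\bV\|$ term by definition) and a singular part supported — this is the point to be checked — on $\Gamma$.

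Concretely, first I would fix $g\in C^1_c(\oball{r_0})$ and, for small $\varepsilon>0$, consider the cutoff $g_\varepsilon(x):=\chi\!\left(\rho(x)/\varepsilon\right)g(x)$ where $\chi$ is a fixed smooth function vanishing near $0$ and equal to $1$ on $[1,\infty)$; since $\rho>0$ away from $\Gamma$ and $\rho\to 0$ at $\Gamma$, the field $g_\varepsilon$ vanishes in a neighborhood of $\Gamma$, so by Assumption \ref{assump:var} and \eqref{E:mean-curv-def}, $\delta_\cF\bV(g_\varepsilon)=-\int \langle\cH_\cF,g_\varepsilon\rangle\,\diff\|\bV\|$. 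Using \cref{P:1var-int} to expand $\delta_\cF\bV(g-g_\varepsilon)$, the dangerous term is $\cB_\cF(x,\pi):Dg_\varepsilon$ through $D g_\varepsilon = \chi(\rho/\varepsilon)Dg + \varepsilon^{-1}\chi'(\rho/\varepsilon)\,g\otimes D\rho$; the second summand is $O(\varepsilon^{-1})$ but supported in the thin region $\{\rho\sim\varepsilon\}$. Here I would use item (ii) of \cref{Lgood-dist}: $\rho D\rho=\sum_C\varphi_C\bp_{N_{p_C}\Gamma}(x-p_C)+Y$ with $|Y|\le c\kappa\rho^{1+\alpha}$, which lets me rewrite $\varepsilon^{-1}\chi'(\rho/\varepsilon)\,g\otimes D\rho$ so that the part of $g$ lying in the \emph{normal} directions $N_{p_C}\Gamma$ is what survives, while the tangential part contributes only a controlled error (via the $C^{1,\alpha}$ estimates \eqref{E:bdr_assump}); this is exactly the mechanism that forces the limiting singular measure to take values in $N_p\Gamma$.

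Then I would pass to the limit $\varepsilon\to 0$: the absolutely continuous contributions converge to $-\int\langle\cH_\cF,g\rangle\,\diff\|\bV\|$ by dominated convergence (using $\cH_\cF\in\mathrm{L}^1$), and the boundary contributions, after the rewriting above, are bounded by $C\sup|g|$ times a constant depending on $\kappa,r_0,\cF$ and the total mass $\|\bV\|(\oball{r_0})$ — crucially with no derivative of $g$ — which shows $g\mapsto\delta_\cF\bV(g)$ is bounded on $C^0$, hence extends to a Radon measure on all of $\oball{r_0}$. Applying Riesz and Radon--Nikodim decomposition $\delta_\cF\bV = (\delta_\cF\bV)_{\mathrm{ac}} + (\delta_\cF\bV)_{\mathrm{sing}}$ relative to $\|\bV\|$, the a.c.\ part must be $-\cH_\cF\|\bV\|$ on $\oball{r_0}\setminus\Gamma$, and the singular part is concentrated on $\Gamma$ because off $\Gamma$ we already know $\delta_\cF\bV \ll \|\bV\|$; writing $\|\delta_\cF\bV\|_{\mathrm{sing}}$ for its total variation and $\cN_\cF$ for the Radon--Nikodym direction, the normality $\cN_\cF(p)\in N_p\Gamma$ is read off from the structure of the limit computed above, after a density-point argument on $\Gamma$ combined with the continuity of $p\mapsto N_p\Gamma$.

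The main obstacle I anticipate is the bookkeeping in the thin-shell term $\{\rho\sim\varepsilon\}$: one must show that the $O(\varepsilon^{-1})$ blow-up of $|Dg_\varepsilon|$ is exactly compensated, not just by the $\|\bV\|$-mass of the shell (which need \emph{not} be $O(\varepsilon)$ without a monotonicity formula!), but by the algebraic cancellation coming from the fact that $\cB_\cF(x,\pi):(g\otimes D\rho)$ pairs $D\rho$ — which points essentially normally to $\Gamma$ by \cref{Lgood-dist}(ii) — against directions that, for the surviving part of $g$, lie in $N_{p_C}\Gamma$, reducing everything to the tangential error terms that the Whitney partition and the $C^{1,\alpha}$ bounds \eqref{E:bdr_assump} keep integrable. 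This is precisely where the absence of a monotonicity formula in the anisotropic setting could bite, so the argument must be arranged to use only the pointwise structure of $\cB_\cF$ together with the finiteness of $\|\bV\|(\oball{r_0})$, rather than any mass-ratio bound.
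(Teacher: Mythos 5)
Your overall architecture matches the paper's proof: introduce a cutoff built from the regularized distance $\rho$ of \cref{Lgood-dist}, split $Dg$ into a piece supported away from $\Gamma$ (which can be converted to a mean-curvature integral via \eqref{E:mean-curv-def}), a tame remainder, and a thin-shell term proportional to $\chi'(\rho/\varepsilon)\,g\otimes D\rho$; then conclude by Riesz representation and Radon--Nikodym decomposition. The paper's $(1-f_h\circ\rho)$ plays exactly the role of your $\chi(\rho/\varepsilon)$ and its $T_1,T_2,T_3$ are your three pieces. You also correctly single out the thin-shell term as the crux, and you correctly observe that its control cannot rest on a mass bound for the shell $\{\rho\sim\varepsilon\}$, since no monotonicity formula is available.

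The gap is in the mechanism you offer for that crucial estimate. You claim that $\cB_\cF(x,T_xM):(g\otimes D\rho)$ is tamed by an algebraic cancellation because $D\rho$ points essentially along $N_{p_C}\Gamma$ and the surviving part of $g$ does too. But $\cB_\cF$ is built from the varifold tangent plane $T_xM$, which near $\Gamma$ is a priori completely unrelated to $T_p\Gamma$ or $N_p\Gamma$. Already for the area integrand one has $\cB(T_xM):(g\otimes D\rho)=\langle \bp_{T_xM}g, D\rho\rangle$; taking $T_xM$ to contain the direction of $D\rho$, and choosing $g$ proportional to $D\rho$ (hence normal to $\Gamma$), this pairing equals $|g|\,|D\rho|$ with no cancellation at all. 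There is no pointwise algebraic reason the thin-shell integrand is small, so your ``reducing everything to tangential error terms'' does not happen. What actually closes the estimate --- and what the paper delegates to \cite[Equation 3.10]{Theodora} --- is an \emph{auxiliary} first-variation test: one plugs $Z_h(x)=f_h(\rho(x))\sum_C\varphi_C(x)\,\bp_{N_{p_C}\Gamma}(x-p_C)$, which vanishes on $\Gamma$ and satisfies $|Z_h|\lesssim h$, into $\delta_\cF\bV$, obtaining $|\delta_\cF\bV(Z_h)|\lesssim h\,\|\cH_\cF\|_{\mathrm{L}^1}$; expanding $DZ_h$ via \cref{Lgood-dist}(ii) produces a sign-controlled term of the shape $\int f_h'(\rho)\,\rho\,\cB_\cF(T_xM):(\nabla\rho\otimes\nabla\rho)\,\mathrm{d}\|\bV\|$ (recall $f_h'\leq 0$, and for area this is $-\int |f_h'(\rho)|\,\rho\,|\bp_{T_xM}\nabla\rho|^2\leq 0$). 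The resulting a priori bound on this signed quantity, inserted by Cauchy--Schwarz into the $T_3$ integrand, is what replaces the unavailable shell-mass bound. Without this auxiliary test and sign argument, your estimate does not close, and your claim that ``the argument must be arranged to use only the pointwise structure of $\cB_\cF$'' is precisely what cannot be done.
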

\begin{remark}\label{rem:rectifiability}
Thanks to \cref{P:1var-bdr}, under \cref{assump:general}, $\delta_{\cF}\bV$ is a Radon measure on the whole ball $\oball{r_0}$ and $\Theta\geq 1$, $\|\bV\|$-a.e. in $\oball{r_0}$. Hence, if $\cF$ satisfies USAC, by the Rectifiability criterium \cite[Theorem 1]{de2018rectifiability} and \cref{rem:rectifiability0}, the varifold $\bV$ shall be $m$-rectifiable.
\end{remark}
\begin{proof}
We want to show that for any compact subset $W\subset \oball{r_0}$ and $g$ of class $C^{1}$ with support in $W$, we have $\delta_{\cF}\bV(g) \leq C \sup_{x\in\oball{r_0}}|g(x)|$. To that end, we cannot directly apply \eqref{E:mean-curv-def}, since $g$ does not need to vanish on $\Gamma$. We thus define the family of smooth functions $f_h:\R\to\R$ such that $h\in]0,1[$,
\begin{equation*}
f_h(t) = \begin{cases}
1, & \text{if } t\leq h/2, \\
0, & \text{if } t\geq h,
\end{cases}, \quad f_h^{\prime}(t) \leq 0, \quad |f_h^{\prime}(t)| \leq 3/h.
\end{equation*}

Recalling the definition of $\rho$ in \cref{Lgood-dist}, by \cref{P:1var-int}, we obtain that
\begin{equation*}
\begin{aligned}
\delta_{\cF}\bV(g) &= \int_{\Gr(\oball{r_0}\setminus \Gamma)}\biggl[\left\langle D_x\cF, g\right\rangle + \cB_{\cF}: Dg\biggr]\mathrm{d}\bV \\
&= \overbrace{\int_{\Gr(\oball{r_0}\setminus \Gamma)}\left\langle D_x\cF, g\right\rangle\mathrm{d}\bV}^{(\ast)} + \int_{\Gr(\oball{r_0}\setminus \Gamma)}\cB_{\cF}: D\left(g+(f_h\circ\rho) g - (f_h\circ\rho) g\right)\mathrm{d}\bV.
\end{aligned}
\end{equation*}

Notice that $(\ast)$ is controlled by $C_{\cF, W} \sup_{\oball{r_0}}|g|,$ thus it remains to bound 
\begin{equation}\label{E:add-subt-f_h-rho}
    \int_{\Gr(\oball{r_0}\setminus \Gamma)}\cB_{\cF}: \biggl[ \overbrace{D\left(\left( 1 - f_h\circ\rho \right)g\right)}^{T_1}+\overbrace{\left(f_h\circ\rho \right) Dg}^{T_2} +  \overbrace{f_h^{\prime}\circ\rho (\nabla \rho)^t\cdot g}^{T_3} \biggr]\mathrm{d}\bV.
\end{equation}

Using that $\cF$ is of class $C^{1}$ and $g$ has support in $W$, by the definition of $\cB_{\cF}$ in \eqref{eq1}, we can bound the modulus of \eqref{E:add-subt-f_h-rho} by $C |T_1+T_2+T_3|,$ where the constant is such that $C = C(\cF, W)>0$. 

Since $\left( 1 - f_h\circ\rho \right)g$ vanishes on $\Gamma$, by \eqref{E:mean-curv-def}, we have that 
\begin{equation}\label{E:mean-curv-1var}
\int_{\Gr(\oball{r_0}\setminus \Gamma)}\cB_{\cF}: D(\left( 1 - f_h\circ\rho \right)g)\mathrm{d}\bV  = - \int_{\Gr(\oball{r_0}\setminus \Gamma)} \left\langle \left( 1 - f_h\circ\rho \right)g, \cH_{\cF}+D_x\cF\right\rangle\mathrm{d}\bV.
\end{equation}

We notice that $f_h\circ\rho \to 0$ as $h\to 0$, which together with \eqref{E:mean-curv-1var} ensures the estimate $|T_1|+|T_2| \leq C_1(\cF, W)\sup|g|$. It remains to bound the last summand $T_3$ by $C_2(\cF, W)\sup|g|$, which is done by precisely the same proof provided in \cite[Equation 3.10]{Theodora}. Therefore we have that $\delta_{\cF}\bV(g) \leq C \sup_{\oball{r_0}}|g|$ which guarantees that $\delta_{\cF}\bV$ is a Radon measure on $\oball{r_0}$. The moreover part can be proved as in \cite[Theorem 3.1]{Theodora}, hence we omit the details here.
\end{proof}

\section{Caccioppoli inequality at boundary points}

A usual step in the proof of regularity theorems is proving an estimate where the excess is controlled by the height, mean curvature, and an 'error' in case of 'boundary points'. This is the so-called Caccioppoli-type inequality. To the best of our knowledge, there is no such result for boundary points of $m$-rectifiable varifolds with $\mathrm{L}^2$-integrable anisotropic mean curvature. 

Allard did prove a Caccioppoli-type inequality in \cite[Lemma 4.5]{AllB} for the area functional. Unfortunately, the techniques used in the isotropic case do not work in the anisotropic case due to the lack of a monotonicity formula. We also have another difficulty compared to Allard's work: our boundary $\Gamma$ has regularity $C^{1,\alpha}$ while the setting of \cite{AllB} requires a boundary $\Gamma$ of class $C^{1,1},$ as explained in the introductory section. 

We aim to achieve a Caccioppoli-type inequality (\cref{P:caccioppoli_ineq}) in the sense of \cite[Lemma 4.5]{AllB}, \cite[Proposition 4.3]{de2020regularity}, and \cite[Lemma 4.10]{Theodora}. 

\begin{assumption}\label{assump:reg_theory}
We assume \cref{assump:general}. We further impose that the anisotropic functional $\cF$ satisfies USAC, defined in \cref{D:USAC}, and $\cH_{\cF}\in\mathrm{L}^{2}(\oball{r_0})$.
\end{assumption}

Under such assumptions, by \cref{rem:rectifiability}, the varifold $\bV$ is $m$-rectifiable. So, henceforth we might use the following notation $\bV = \bv(M,\Theta)$. We define the classical notions of excess and height for varifolds as follows.

\begin{definition}\label{D:excess_height}
Let $\bV = \bv(M,\Theta)$ be a rectifiable $m$-varifold and $\pi\in\Gr(m+n,m).$ We define \emph{the tilt excess of $\bV$ with respect to $\pi$ in $\ball{x}{r}$} as the number
\begin{equation*}\bE_{\bV}(\pi, x, r):= \frac{1}{r^{m}}\int_{\ball{x}{r}}\|{\pi} - {T_yM}\|^2 \mathrm{d}\|\bV\|(y).  \end{equation*}

We also define \emph{the height excess of $\bV$ with respect to $\pi$ in $\ball{x}{r}$} to be the number
\begin{equation*} \bH_{\bV}(z, \pi, x, r) := \frac{1}{r^{m}} \int_{\ball{x}{r}} \dist(y-z,\pi)^2\mathrm{d}\|\bV\|(y) .\end{equation*}

We usually hide the subscripts  whenever it is clear from the context.
\end{definition}

We now state the Caccioppoli-type inequality in this context.

\begin{proposition}[Caccioppoli-type inequality]\label{P:caccioppoli_ineq}
Under \cref{assump:reg_theory}, there exists a constant $C=C(m,n,\|\cF\|_{C^2},K_{\cF}, \Gamma)>0$ such that 
\begin{equation}\label{cacc0}
C\bE(\pi, 0, r/2) \leq \frac{1}{r^2}\bH(z, \pi,0,r) + r^{2-m}\|\cH_{\cF}\|_{\mathrm{L}^{2}(\oball{r})}^2 + \kappa^2r^{2\alpha},
\end{equation}
for all $z\in\R^{m+n}, 4r<r_0, \pi\in\Gr(m+n,m)$ with $T_0\Gamma\subset\pi$.
\end{proposition}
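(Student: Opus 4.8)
The strategy is to test the anisotropic first variation formula from \cref{P:1var-bdr} with a carefully chosen vector field that combines a radial cutoff with the displacement $y \mapsto (y-z)^{\perp_\pi}$, and then use USAC to convert the resulting inequality into control of the tilt excess. Concretely, I would fix a smooth radial cutoff $\chi$ with $\chi \equiv 1$ on $\ball{0}{r/2}$, $\chi \equiv 0$ outside $\ball{0}{r}$, and $|D\chi| \le c/r$, and test with a field of the form $g(y) = \chi(y)^2 \, \bp_{\pi^\perp}(y-z)$ paired against the matrix $\cB_{\cF}$. The key algebraic point, exactly as in the interior case \cite[Proposition 4.3]{de2020regularity}, is that $Dg$ contains a term $\chi^2 \, \bp_{\pi^\perp}$ whose contraction against $\cB_{\cF}(y, T_yM)$ reproduces — up to lower order in $\|\pi - T_yM\|$ — the quantity $\cB_{\cF}(y,\pi) : \cB_{\cF^*}(y, T_yM^\perp)$, which USAC bounds below by $K_{\cF} \|\pi - T_yM\|^2$. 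This is where the USAC hypothesis is genuinely used and where the anisotropic argument departs from Allard's monotonicity-based proof.

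The execution breaks into the following steps. First, expand $\delta_\cF \bV(g)$ using \cref{P:1var-int}: the $D_x\cF$ term is harmless (bounded by $\|\cF\|_{C^1}$ times $\sup|g| \le \|\cF\|_{C^1} \cdot r \cdot \|\bV\|(\ball{0}{r})$, which after the scaling normalization is absorbed into the right-hand side), and the $\cB_{\cF}:Dg$ term splits via the product rule into the ``good'' term $\chi^2 \cB_{\cF}(y,T_yM):\bp_{\pi^\perp}$ and a ``cutoff-derivative'' term involving $D(\chi^2)$ paired with $\bp_{\pi^\perp}(y-z)$. Second, bound the cutoff-derivative term: since $|D(\chi^2)| \le c\chi/r$ and $|\bp_{\pi^\perp}(y-z)| = \dist(y-z,\pi)$, Young's inequality gives a bound by $\varepsilon \int \chi^2 \|\pi - T_yM\|^2 + C_\varepsilon r^{-2}\int_{\ball{0}{r}} \dist(y-z,\pi)^2$, i.e. $\varepsilon \cdot (\text{excess}) + C_\varepsilon r^{-2} \cdot r^m \bH(z,\pi,0,r)$; the first piece is absorbed into the left-hand side for $\varepsilon$ small. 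Third, bound the contribution from $\delta_\cF\bV$ itself: the mean-curvature part gives $\int_{\ball{0}{r}} |\cH_\cF| |g| \le \int_{\ball{0}{r}}|\cH_\cF| \chi^2 \dist(y-z,\pi)$, handled again by Young to produce $r^{2-m}\|\cH_\cF\|_{L^2(\ball{0}{r})}^2$ plus a reabsorbable excess term. Fourth — and this is the new ingredient compared to the interior result — handle the boundary singular part $\int_\Gamma \langle \cN_\cF, g\rangle \, d\|\delta_\cF\bV\|_{\mathrm{sing}}$: here one uses that $\cN_\cF(p) \in N_p\Gamma$, that $T_0\Gamma \subset \pi$ so $\bp_{\pi^\perp}$ annihilates $T_0\Gamma$, and the $C^{1,\alpha}$ estimates \eqref{E:bdr_assump} to show $|\langle \cN_\cF(p), \bp_{\pi^\perp}(p-z)\rangle|$ is controlled; combined with a bound on $\|\delta_\cF\bV\|_{\mathrm{sing}}(\Gamma \cap \ball{0}{r})$ (which, via \cref{P:1var-bdr} and a density/comparison argument as in \cite[Section 4]{Theodora}, is $O(r^{m-1})$), this produces the $\kappa^2 r^{2\alpha}$ term after Young's inequality — modulo also choosing $z$ appropriately, or noting the stated inequality holds for all $z$ including the relevant near-$\Gamma$ choice.

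\textbf{Main obstacle.} The hardest part is the boundary term. In Allard's isotropic setting \cite[Lemma 4.5]{AllB} this is controlled using the monotonicity formula together with the $C^{1,1}$ regularity of $\Gamma$ (differentiability of the distance function in a tubular neighborhood), neither of which is available here: the anisotropy destroys monotonicity, and $\Gamma$ is only $C^{1,\alpha}$. The remedy is to replace the distance function by the regularized function $\rho$ from \cref{Lgood-dist} and to use the precise decomposition in \cref{Lgood-dist}(ii) — $\rho D\rho = \sum_C \varphi_C \bp_{N_{p_C}\Gamma}(x-p_C) + Y$ with $|Y| \le c\kappa \rho^{1+\alpha}$ — to estimate how much $\bp_{\pi^\perp}(y-z)$ fails to be tangent to $\Gamma$ near the boundary; the $C^{1,\alpha}$ normal-bundle oscillation bound $\|\bp_{N_x\Gamma} - \bp_{N_y\Gamma}\| \le \kappa|x-y|^\alpha$ then feeds the power $r^{2\alpha}$. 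A secondary technical point is obtaining the a priori bound $\|\delta_\cF\bV\|(\ball{0}{r}) = O(r^{m-1})$ near $\Gamma$ without a monotonicity formula: this requires an argument in the spirit of \cite{Theodora} using the Whitney-partition cutoffs, and should be isolated as a preliminary lemma (or extracted from the proof of \cref{P:1var-bdr}) before running the Caccioppoli estimate above.
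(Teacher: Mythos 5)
The central difficulty you correctly flag — the boundary singular term of $\delta_\cF\bV$ — is exactly where your approach diverges from the paper's, and it is where it breaks down. Your test field $g(y) = \chi(y)^2\,\bp_{\pi^\perp}(y-z)$ does not vanish on $\Gamma$, so \eqref{E:mean-curv-def} cannot be invoked and you are forced to estimate $\int_\Gamma \langle\cN_\cF, g\rangle\,\mathrm{d}\|\delta_\cF\bV\|_{\mathrm{sing}}$. You propose to do this via an a priori bound $\|\delta_\cF\bV\|_{\mathrm{sing}}(\Gamma\cap\ball{0}{r}) = O(r^{m-1})$ ``in the spirit of Bourni'', but that bound in the isotropic boundary theory descends from the monotonicity formula, which is unavailable here; indeed the entire point of the paper's anisotropic program is that no such density bound is at hand a priori. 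Moreover, even granting the bound, the integral $\int_\Gamma \chi^2\,|\bp_{\pi^\perp}(p-z)|\,\mathrm{d}\|\delta_\cF\bV\|_{\mathrm{sing}}$ is not controlled by $\bH(z,\pi,0,r)$ (which integrates $\dist(\cdot - z,\pi)^2$ against $\|\bV\|$, not against $\|\delta_\cF\bV\|_{\mathrm{sing}}$ on $\Gamma$) nor by $\kappa^2 r^{2\alpha}$ for arbitrary $z$: the factor $|\bp_{\pi^\perp}(p-z)|$ can be of size $r$ for a generic admissible $z$, and $\cN_\cF(p)\in N_p\Gamma$ does not make it small, since $N_p\Gamma$ has dimension $n+1$ and has no reason to be orthogonal to $\pi^\perp$.

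The paper's actual proof avoids the boundary term entirely rather than estimating it: one sets $\overline{\Gamma} := \Gamma + (N_0\Gamma\cap\pi)$ (an $m$-dimensional $C^{1,\alpha}$ manifold containing $\Gamma$), takes a Whitney decomposition $\overline{\cW}$ of $\oball{r_0}\setminus\overline{\Gamma}$ with partition of unity $\{\overline{\varphi}_C\}$, and tests with
$g(x) = \psi^2(x)\sum_{C\in\overline{\cW}}\overline{\varphi}_C^2(x)\,\cB_{\cF^*}(\pi^\perp)\bigl(\bp_{N_{\overline{x}_C}\overline{\Gamma}}(x-\overline{x}_C)\bigr)$. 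By construction $g\equiv 0$ on $\overline{\Gamma}\supset\Gamma$, so one can plug it directly into \eqref{E:mean-curv-def} with no singular part, and the Whitney cutoff derivatives combined with \eqref{E:bdr_assump} are what produce the $\kappa^2 r^{2\alpha}$ term (via the gradient estimate \eqref{E:grad_part_unity} and the H\"older oscillation of $\bp_{N\overline{\Gamma}}$). A second, smaller issue in your proposal: the field must contain the factor $\cB_{\cF^*}(\pi^\perp)$ composed with the projection, not the bare projection $\bp_{\pi^\perp}$. With your choice, the ``good'' term in $Dg$ is $\cB_\cF(T_yM):\bp_{\pi^\perp}$, and your claim that this reproduces $\cB_\cF(\pi):\cB_{\cF^*}(T_yM^\perp)$ up to lower order is not justified and is false for a general USAC integrand (the $D_\pi\cF$ part of \eqref{eq1} destroys the sign); building $\cB_{\cF^*}(\pi^\perp)$ into $g$ is what makes $Dg$ produce exactly $\cB_{\cF^*}(\pi^\perp)\circ\bp_{N_{\overline{x}_C}\overline{\Gamma}}$, whose contraction against $\cB_\cF(T_yM)$ is, up to the $O(\kappa r^\alpha)$ error in the projection, the genuine USAC quantity $\cB_\cF(T_yM):\cB_{\cF^*}(\pi^\perp)\ge K_\cF\|T_yM-\pi\|^2$.
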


When the varifold $\bV$ is induced by the graph of a Lipschitz function, the next corollary states that the quantities in \cref{P:caccioppoli_ineq} can be replaced by integrations on balls of the subspace $\R^{m}$, while in \cref{P:caccioppoli_ineq} they are quantities/integrations over balls of the ambient space $\R^{m+n}.$ 

Given an open bounded set $\Omega\subset\obball{r_0}\subset\R^m $ and a Lipschitz function $u: \Omega\to \R^n$, we will denote by 
$$\bV[u]:=\bv(\graph(u),1)$$
 the $m$-varifold induced by $\graph(u)\subset\R^{m+n}$ and by $\cH_{\cF}$ its anisotropic mean curvature. Let also $\cH[u]:\Omega\to\R^n$ denote the function $\cH[u](x):=\cH_{\cF}(x,u(x))$ and, for any $R>0, z\in\obball{R}, s<\dist(z,\partial\obball{R}),$ and $f:\obball{R}\subset\R^m\to\R^n$ measurable function, we set
\begin{equation*} (f)_{x,s}:= \frac{1}{\cH^m\left(\bball{x}{s}\cap s\Omega\right)}\int_{\bball{x}{s}\cap s\Omega}f(y)\mathrm{d}y \quad \text{and} \quad (f)_{s}:=(f)_{0,s}.\end{equation*}

For the reader's convenience, we recall that $\bball{x}{s}:=\ball{(x,0)}{s}\cap(\R^m\times\{0\}).$

\begin{corollary}[Caccioppoli-type inequality]\label{cor:caccioppoli_ineq_graph}
Assume that $\bV[u]$ and $\cH[u]$ satisfy \cref{assump:reg_theory}. There exists a constant $C_c=C_c(m,n,\|\cF\|_{C^2},K_{\cF},\Gamma, \|u\|_{\mathrm{Lip}})>0$ and $C_u:= 2 + 2 \|u\|_{\mathrm{Lip}}$ such that 
\begin{equation*}
\begin{aligned}
C_c\mint_{\obball{r}\cap\Omega}\|Du(y) - L\|^2\mathrm{d}y &\leq \frac{1}{r^{2}} \mint_{\obball{C_u r}\cap\Omega}|u(y) - L(y)|^2\mathrm{d}y \\
&\quad + r^2\mint_{\obball{C_u r}\cap\Omega}|\cH[u](y)|^2\mathrm{d}y + \kappa^2r^{2\alpha},
\end{aligned}
\end{equation*}
for all $L\in\R^{m}\otimes\R^n$ such that $T_0\Gamma\subset\mathrm{im}(h(L))$ and $\|L\|\leq 2\|u\|_{\mathrm{Lip}}$ and all $r\in(0,4^{-1}r_0)$.
\end{corollary}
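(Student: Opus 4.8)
The plan is to deduce Corollary~\ref{cor:caccioppoli_ineq_graph} from Proposition~\ref{P:caccioppoli_ineq} by a change of variables that translates the ``ambient'' quantities $\bE$ and $\bH$ into their ``graphical'' counterparts, paying attention to the fact that the relevant balls in $\R^{m+n}$ and in $\R^m$ differ only up to the Lipschitz factor $C_u = 2 + 2\|u\|_{\mathrm{Lip}}$. The first step is the dictionary between $L \in \R^m \otimes \R^n$ and planes: to a matrix $L$ one associates the $m$-plane $\pi_L := \mathrm{im}(h(L)) = \{(v, Lv) : v \in \R^m\} \in \Gr(m+n,m)$, and conversely. Under the bound $\|L\| \le 2\|u\|_{\mathrm{Lip}}$, the map $L \mapsto \pi_L$ is bi-Lipschitz onto its image with constants depending only on $\|u\|_{\mathrm{Lip}}$, and the hypothesis $T_0\Gamma \subset \mathrm{im}(h(L))$ is exactly the hypothesis $T_0\Gamma \subset \pi$ of Proposition~\ref{P:caccioppoli_ineq}. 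Likewise $T_y M = T_{(y, u(y))}\mathrm{graph}(u) = \pi_{Du(y)}$ for a.e.\ $y \in \Omega$, so $\|\pi_L - T_y M\|^2$ is comparable to $\|L - Du(y)\|^2$ with constants depending only on $\|u\|_{\mathrm{Lip}}$.

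Next I would rewrite each of the three terms in \eqref{cacc0}, applied with $x = 0$ and a radius of order $r$, using the area formula for the Lipschitz graph. Writing $\Phi(y) := (y, u(y))$, one has $\|\bV[u]\|(A) = \int_{\bp(A) \cap \Omega} J\Phi\, dy$ with $1 \le J\Phi \le (1 + \|u\|_{\mathrm{Lip}}^2)^{m/2} =: \Lambda$, so each $\|\bV[u]\|$-integral over an ambient ball $\ball{0}{s}$ becomes, up to the constant $\Lambda$, the integral over $\bp(\ball{0}{s} \cap \mathrm{graph}(u)) \cap \Omega$. For the \emph{tilt excess} term I use that $\mathrm{graph}(u) \cap \ball{0}{s}$ projects into $\obball{s} \cap \Omega$ and contains the graph over $\obball{s/C_u} \cap \Omega$, so $\bE(\pi_L, 0, r/2)$ is comparable to $\mint_{\obball{r}\cap\Omega}\|Du - L\|^2$ up to adjusting $r$ by the fixed factor $C_u$; the loss in the radius is harmless because the excess is monotone enough in $r$ and all constants are allowed to depend on $\|u\|_{\mathrm{Lip}}$. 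For the \emph{height excess} term, pick $z = (0, L(0)) \in \R^{m+n}$; then for $y \in \Omega$ the point $\Phi(y) - z = (y, u(y) - L(0))$ has $\mathrm{dist}(\Phi(y) - z, \pi_L)^2$ comparable to $|u(y) - L(y)|^2$, again with constants depending only on $\|u\|_{\mathrm{Lip}}$, since $(v, Lv)$ ranges over $\pi_L$. Finally, for the \emph{mean curvature} term, $r^{2-m}\|\cH_{\cF}\|_{\mathrm{L}^2(\oball{r})}^2$ becomes, after the area formula and enlarging the radius to $C_u r$, comparable to $r^2 \mint_{\obball{C_u r}\cap\Omega}|\cH[u]|^2$, using $\cH[u](y) = \cH_{\cF}(y, u(y))$ and $\cH^m(\bball{0}{C_u r}\cap C_u r\,\Omega) \gtrsim r^m$ (up to the normalization hidden in $\mint$). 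The term $\kappa^2 r^{2\alpha}$ is untouched.

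Assembling these comparisons, Proposition~\ref{P:caccioppoli_ineq} applied with $\pi = \pi_L$ and radius a fixed multiple of $r$ yields
\begin{equation*}
C_c \mint_{\obball{r}\cap\Omega}\|Du(y) - L\|^2\, dy \le \frac{1}{r^2}\mint_{\obball{C_u r}\cap\Omega}|u(y) - L(y)|^2\, dy + r^2 \mint_{\obball{C_u r}\cap\Omega}|\cH[u](y)|^2\, dy + \kappa^2 r^{2\alpha},
\end{equation*}
with $C_c$ depending on the same data as $C$ in Proposition~\ref{P:caccioppoli_ineq} together with $\|u\|_{\mathrm{Lip}}$, and with $C_u = 2 + 2\|u\|_{\mathrm{Lip}}$ chosen (slightly wastefully) large enough to absorb all the radius enlargements and the projection of $\ball{0}{r}\cap\mathrm{graph}(u)$ into $\R^m$. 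The constraint $4r < r_0$ in Proposition~\ref{P:caccioppoli_ineq} forces $r \in (0, 4^{-1}r_0)$. I expect the only genuinely delicate point to be bookkeeping the radii: one must check that the ambient ball in which the height and curvature are measured, after being realized as a region in $\R^m$, is contained in $\obball{C_u r}\cap\Omega$, and dually that the graph over $\obball{r}\cap\Omega$ sits inside the ambient ball of radius comparable to $r$ in which the tilt excess is taken — this is where the specific constant $C_u = 2 + 2\|u\|_{\mathrm{Lip}}$ earns its value, since $|\Phi(y) - \Phi(0)| \le (1 + \|u\|_{\mathrm{Lip}})|y|$ and, conversely, $|y| \le |\Phi(y) - \Phi(0)|$. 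Everything else is the routine bi-Lipschitz comparison of the two metrics and the area formula.
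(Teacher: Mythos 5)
Your proof is correct and takes essentially the same approach as the paper: apply \cref{P:caccioppoli_ineq} with $\pi = \mathrm{im}(h(L))$ and $z = (0,0)$, then convert the ambient tilt excess, height excess, and mean-curvature terms into their graphical counterparts via the area formula and bi-Lipschitz comparisons, enlarging radii by the Lipschitz factor $C_u$. The paper defers exactly these details to the earlier De Rosa--Tione reference and only records the choice $z = (0,0)$ (to avoid subtracting the mean on the right-hand side), which you make as well via $z = (0, L(0)) = (0,0)$.
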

\begin{remark}\label{R:def-h}
The function $h$ stands for one of the canonical charts of the Grassmannian, we make it precise defining $h:\R^m\otimes\R^n\to \R^{m+n}\otimes\R^{m+n}$ as
\begin{equation*}
    h(L) := M(L)\left[M(L)^tM(L)\right]^{-1}M(L)^t, \text{ where } M(L):= \biggl( \begin{array}{c} \mathrm{id}_m \\ L \end{array} \biggr). 
\end{equation*}    

We refer the reader to \cite[Subsection 6.1]{de2019geometric}, \cite[Page 470]{de2020regularity}, and \cite[Subsection A.6]{hirsch2021constancy} for a more expository introduction to these objects.
\end{remark}
\begin{proof}
Extending this proof from the interior case to boundary points setting is identical to the argument presented in \cite[Corollary 4.4]{de2020regularity}, but now relying on \cref{P:caccioppoli_ineq}. We point out that one may choose $p$ equal to $(0,0)\in\R^m\times\R^n$ in the proof of \cite[Corollary 4.4]{de2020regularity} instead of $p=(0, (u)_{0,s})$ to get rid of the mean of the function in the right-hand side. This is surely possible thanks to the fact that \cref{P:caccioppoli_ineq} holds true for any choice of point $z\in \R^{m+n}$.
\end{proof}

\begin{proof}[Proof of \cref{P:caccioppoli_ineq}]
First of all, by standard arguments, cf. \cite[Page 465]{de2020regularity}, we can assume without loss of generality that $\cF$ is an autonomous functional, i.e., it does not depend on the variable in $\R^{m+n}$. Hence we will denote $\cB_{\cF}(\pi)\equiv \cB_{\cF}(x,\pi)$ and $K_{\cF}\equiv K_{\cF,x}$.
We can set the $m$-manifold of class $C^{1,\alpha}$ given by  $\overline{\Gamma} = \Gamma + (N_0\Gamma\cap\pi)$. In particular, by \cref{rem:whitney_barB}, we have a Whitney decomposition $\overline{\cW}$ of $\oball{r_0}\setminus \overline{\Gamma}$. We denote with $x_C$ and  $\overline{x}_C$ respectively the center of the cube $C$ and the orthogonal projection of $x_C$ on $\overline{\Gamma}$. We consider a Whithney's partition of unity $\{\overline{\varphi}_C\}_{C\in\cW}$, and a $C^1$ function $\overline{\rho}$ satisfying all the conclusions of \cref{Lgood-dist}.

We choose the following vector field $g\in C^1_c(\oball{2r}, \R^{m+n})$ as a test for the first variation:
\begin{equation*}
g(x) := \psi^2(x)\sum_{C\in\cW}\overline{\varphi}^2_C(x) g_C(x), \quad \mbox{where} \quad g_C(x) := \cB_{\cF^*}(\pi^{\perp})(\bp_{N_{\overline{x}_C}\overline{\Gamma}}(x - \overline{x}_C)),
\end{equation*}
where $\psi\in C_c^{\infty}(\oball{2r}, [0,1])$ such that $\psi |_{\oball{r}}\equiv 1$. It is important to choose $g$ using the Whitney decomposition, since it ensures that $g|_{\overline{\Gamma}}\equiv 0$, in particular $g|_\Gamma\equiv 0$, and then \eqref{E:mean-curv-def} holds. By direct computations we obtain that
\begin{align}
Dg &= \sum_{C\in\cW}\biggl[2\psi\overline{\varphi}^2_C\left(g_C\right)\cdot\left(\nabla \psi\right)^{t} + \psi^2\overline{\varphi}^2_C Dg_C + 2\psi^2 \overline{\varphi}_Cg_C\cdot \left(\nabla\overline{\varphi}_C\right)^{t}\biggr], \label{E:formula_Dg} \\
Dg_C &= \cB_{\cF^*}(\pi^{\perp})\circ\bp_{N_{\overline{x}_C}\overline{\Gamma}}. \label{E:formula_DgC}
\end{align}

Equation \eqref{E:formula_Dg} together with \eqref{E:mean-curv-def} assures that
\begin{align}\label{intermediate}
-\int \left\langle \cH_{\cF}, g\right\rangle = \int \sum_{C\in\cW}\cB_{\cF}:\biggl[2\psi\overline{\varphi}^2_C\left(g_C\right)\cdot\left(\nabla \psi\right)^{t} + \psi^2\overline{\varphi}^2_C Dg_C + 2\psi^2\overline{\varphi}_Cg_C\cdot \left(\nabla\overline{\varphi}_C\right)^{t}\biggr].
\end{align}

We set the following notation
\begin{align*}
R_1 &:= \int\sum_{C\in\cW} \psi^2(x)\overline{\varphi}^2_C(x)\left\langle \cH_{\cF}(x), g_C(x) \right\rangle\mathrm{d}\|\bV\|(x),\\
R_2 &:= \int\sum_{C\in\cW} 2\psi(x)\overline{\varphi}^2_C(x)\cB_{\cF}(T_xM):\left(g_C(x)\cdot\nabla \psi(x)^{t}\right) \mathrm{d}\|\bV\|(x),\\
R_3 &:= \int\sum_{C\in\cW}2 \psi^2(x)\overline{\varphi}_C(x)\cB_{\cF}(T_xM):\left(g_C(x)\cdot \nabla\overline{\varphi}_C(x)^{t}\right)\mathrm{d}\|\bV\|(x),\\
L_1 &:=  - \int\sum_{C\in\cW} \psi^2(x)\overline{\varphi}^2_C(x) \cB_{\cF}(T_xM) : \cB_{\cF^*}(\pi^{\perp})\circ\bp_{N_{\overline{x}_C}\overline{\Gamma}} \mathrm{d}\|\bV\|(x).
\end{align*}

By \eqref{intermediate} and \eqref{E:formula_DgC} we obtain that
\begin{equation}\label{somma}
 L_1  =  R_1 + R_2 + R_3.  
\end{equation}

We estimate $|L_1|$ from below. By the definition of $L_1$ and the uniformly scalar atomic condition, \cref{D:USAC}, recalling that $\psi |_{\oball{r}}\equiv 1$,we get
\begin{equation}\label{utile}
\begin{aligned}
| L_1 | &\geq K_{\cF}\int\sum_{C\in\cW}\psi^2(x)\overline{\varphi}^2_C(x) \|{T_xM} - {\pi}\|^2 \mathrm{d}\|\bV\|(x) \\
\overset{\eqref{E:partunit_quadrado}}&{\geq} K_{\cF}C\int_{\oball{r}}\|{T_xM} - {\pi}\|^2 \mathrm{d}\|\bV\|(x) = K_{\cF}Cr^m\bE(\pi,0,r),
\end{aligned}
\end{equation}
where here and in the rest of this proof $C=C(m,n, r_0)>0$ is defined in \eqref{E:partunit_quadrado}.
The right-hand side of \eqref{utile} is exactly the desired left hand side in the Caccioppoli-type inequality \eqref{cacc0}, up to the factor $r^m$. Therefore, it remains to bound $|R_1|+|R_2|+|R_3|$ from above with the right hand side in \eqref{cacc0} (again up to the factor $r^m$) plus a term that can be reabsorbed in the left hand side of \eqref{cacc0}. With this aim in mind, let us estimate the term $R_3$. We have that
\begin{equation*} 
    R_3 = \int\sum_{C\in\cW}2\psi^2(x)\overline{\varphi}_C(x)\cB_{\cF}(T_xM):\left(g_C(x)\cdot \nabla\overline{\varphi}_C(x)^{t}\right)\mathrm{d}\|\bV\|(x). 
\end{equation*}

By straightforward linear algebra computations, we have that
\begin{equation}\label{E:lin_alg}
    \cB_{\cF}(\pi)^{t}\cdot\cB_{\cF^*}(\pi^{\perp}) = 0
\end{equation}
which in turn implies
\begin{equation*}
\begin{aligned}
    R_3 = \int\sum_{C\in\cW}&2\psi^2(x)\overline{\varphi}_C(x)\left(\cB_{\cF}(T_xM) - \cB_{\cF}(\pi) \right):\left(g_C(x)\cdot \nabla\overline{\varphi}_C(x)^{t}\right)\mathrm{d}\|\bV\|(x).
\end{aligned} \end{equation*}

We apply Young's inequality to obtain
\begin{align}
|R_3| &\leq \frac{K_{\cF}C}{4}\int\psi^4(x)\|T_xM - \pi\|^2\mathrm{d}\|\bV\|(x) \label{E:first_bound_R3}\\
&\quad + c(m,n,K_{\cF})\int\left|\sum_{C\in\cW}\overline{\varphi}_C(x)g_C(x)\cdot( \nabla\overline{\varphi}_C(x))^t\right|^2\mathrm{d}\|\bV\|(x) \nonumber .
\end{align}

To bound the second summand on the right hand side of the last inequality, we proceed as follows
\begin{equation*}\begin{aligned}
\sum_{C\in\cW}\bp_{N_{\overline{x}_C}\overline{\Gamma}}(x - \overline{x}_C)\cdot( \nabla\overline{\varphi}_C(x))^t &= \sum_{C\in\cW}(\bp_{N_{\overline{x}_C}\overline{\Gamma}}(x - \overline{x}_C) -\bp_{N_{\overline{x}_C}\overline{\Gamma}}(x-\overline{x}))\cdot( \nabla\overline{\varphi}_C(x))^t\\
&= \sum_{C\in\cW}\bp_{N_{\overline{x}_C}\overline{\Gamma}}(\overline{x} - \overline{x}_C)\cdot( \nabla\overline{\varphi}_C(x))^t,
\end{aligned}\end{equation*}
where in the first equality we have used that $\sum_{C\in\cW}\nabla\overline{\varphi}_C\equiv 0$. Plugging the equality above in  \eqref{E:first_bound_R3}, we get that 
\begin{equation}\label{E:R3_final}
\begin{aligned}
    |R_3| &\leq \frac{K_{\cF}C}{4}r^m\bE(\pi,0,r) + c\int\sum_{C\in\cW}\overline{\varphi}_C(x)|\bp_{N_{\overline{x}_C}\overline{\Gamma}}(\overline{x} - \overline{x}_C)|^2| \nabla\overline{\varphi}_C(x)|^2\mathrm{d}\|\bV\|(x) \\
    \overset{\eqref{E:grad_part_unity}}&{\leq} \frac{K_{\cF}C}{4}r^m\bE(\pi,0,r) + c_1\int\sum_{C\in\cW}\overline{\varphi}_C(x)\frac{|\bp_{N_{\overline{x}_C}\overline{\Gamma}}(\overline{x} - \overline{x}_C)|^2}{\diam^2(C)}\mathrm{d}\|\bV\|(x)\\
    \overset{\eqref{E:bdr_assump}}&{\leq} \frac{K_{\cF}C}{4}r^m\bE(\pi,0,r) + \kappa^2 c_1\int\sum_{C\in\cW}\overline{\varphi}_C(x)\frac{|\overline{x} - \overline{x}_C|^{2+2\alpha}}{\diam^2(C)}\mathrm{d}\|\bV\|(x)\\
    &\leq \frac{K_{\cF}C}{4}r^m\bE(\pi,0,r) + \kappa^2 c_1\int\sum_{C\in\cW}\overline{\varphi}_C(x)|\overline{x} - \overline{x}_C|^{2\alpha}\mathrm{d}\|\bV\|(x)\\
    &\leq \frac{K_{\cF}C}{4}r^m\bE(\pi,0,r) + \kappa^2r^{m+2\alpha} c_1,
\end{aligned}
\end{equation}
where $c_1=c_1(m,n,\|\cF\|_{C^2},\cW)>0$.

Turning our attention to $R_1$, thanks to the hypothesis that $\cH_{\cF}$ belongs to $\mathrm{L}^2$, we apply Young's inequality and Jensen inequality to get
\begin{equation}\label{E:bound_R1}
\begin{aligned}
    |R_1|  &=  \biggl|\int \psi^2(x)\left\langle \cH_{\cF}(x),\sum_{C\in\cW}\overline{\varphi}^2_C(x) g_C(x) \right\rangle\mathrm{d}\|\bV\|(x)\biggr|\\
    &\leq r^2\|\cH_{\cF}\|_{\mathrm{L}^2(\oball{2r})}^2 + \frac{C_{1}(m,n)}{r^2} \int\sum_{C\in\cW} \psi^4(x)\overline{\varphi}^4_C(x)|g_C(x)|^2\mathrm{d}\|\bV\|\\
    &\leq r^2\|\cH_{\cF}\|_{\mathrm{L}^2(\oball{2r})}^2 + \frac{C_{1}(m,n)}{r^2} \int\sum_{C\in\cW} \overline{\varphi}_C(x)|g_C(x)|^2\mathrm{d}\|\bV\|.
\end{aligned}
\end{equation}

We now use \eqref{E:lin_alg} to estimate the summand $R_2$ as follows
\begin{equation*}
\begin{aligned}
    |R_2| &\leq \biggl|\int\sum_{C\in\cW}2\psi(x)\overline{\varphi}^2_C(x)\left(\cB_{\cF}(T_xM) - \cB_{\cF}(\pi) \right):\left(g_C(x)\cdot \nabla\psi(x)^{t}\right)\mathrm{d}\|\bV\|(x)\biggr|\\
    &\leq 2 \int \|\psi\| \| \nabla\psi \|  \|\cB_{\cF}(T_xM) - \cB_{\cF}(\pi)  \|\sum_{C\in\cW}\overline{\varphi}^2_C(x)|g_C(x)|\mathrm{d}\|\bV\|(x) \\
    &{\leq} C_2(m,n,\|\cF\|_{C^2}) \int \|\psi\| \|{T_xM}-{\pi}  \|\sum_{C\in\cW}\overline{\varphi}^2_C(x) |g_C(x) |\mathrm{d}\|\bV\|(x)\\
    &{\leq} \frac{r^2K_{\cF}C}{4} \int_{\oball{2r}} \|\psi\|^2\|{T_xM}-{\pi}  \|^2 + \frac{C_2(m,n,\|\cF\|_{C^2})}{r^2}\sum_{C\in\cW}\overline{\varphi}^2_C(x) |g_C(x) |^2\mathrm{d}\|\bV\|(x),
\end{aligned}
\end{equation*}
where in the third inequality we have used that $\cF$ is $C^2$ and that the Grassmannian is compact, and in the fourth inequality we have used again Young's inequality. Since the last chain of inequalities is true for any $\psi$ choosen as above, we can take a sequence $\{\psi_i\}_{i\in\N}\subset C_c^{\infty}(\oball{2r} [0,1])$ such that $\psi_i$ converges to the indicator functions of $\oball{r}$. Therefore we obtain that 
\begin{equation}\label{E:R2_bound}
\begin{aligned}
    |R_2| &\leq \frac{K_{\cF}C}{4} \int_{\oball{r}} \|{T_xM}-{\pi}  \|^2 + \frac{C_2}{r^2} \int_{\oball{2r}}\sum_{C\in\cW}\overline{\varphi}^4_C(x) |g_C(x) |^2\\
    &= \frac{K_{\cF}C}{4}r^m\bE(\pi,0,r) + \frac{C_2}{r^2} \int_{\oball{2r}}\sum_{C\in\cW}\overline{\varphi}_C(x) |g_C(x) |^2,
\end{aligned}
\end{equation}
where $C_2 = C_2(m,n,\|\cF\|_{C^2})>0$. We finally use  \eqref{E:R3_final}, \eqref{E:bound_R1}, and \eqref{E:R2_bound} to estimate
\begin{equation}\label{E:R1R2R3}
\begin{aligned}
    |R_1|+|R_2|+|R_3| &\leq \frac{K_{\cF}C}{2}r^m\bE(\pi,0,r) + r^2\|\cH_{\cF}\|_{\mathrm{L}^2(\oball{2r})}^2 + \kappa^2r^{m+2\alpha} c_1\\
    &\quad + \frac{C_{2}}{r^2} \int_{\oball{2r}}\sum_{C\in\cW}\overline{\varphi}_C(x)|\cB_{\cF^*}(\pi^{\perp})(\bp_{N_{\overline{x}_C}\overline{\Gamma}}(x - \overline{x}_C))|^2\mathrm{d}\|\bV\|,
\end{aligned}
\end{equation}
where $c_1=c_1(m,n,\|\cF\|_{C^2},\cW)>0$, and $C_{2}=C_{2}(m,n,\|\cF\|_{C^2})>0$. It only remains to bound the last summand of the previous inequality. We firstly recall the equality in \cite[Equation 3.5]{de2020regularity} which states that $\cB_{\cF}(\pi^{\perp}) = \cF(\pi)\pi^{\perp} - \pi D\cF(\pi)\pi^{\perp}$ and thus we obtain the following
\begin{equation*} 
\begin{aligned}
    |\cB_{\cF^*}(\pi^{\perp})(\bp_{N_{\overline{x}_C}\overline{\Gamma}}(x - \overline{x}_C))| &\leq \|\cF(\pi)\pi^{\perp} - \pi D\cF(\pi)\pi^{\perp} \| | \bp_{N_{\overline{x}_C}\overline{\Gamma}}(x - \overline{x}_C) | \\
    &\leq \|\cF\|_{C^2} | \bp_{N_{\overline{x}_C}\overline{\Gamma}}(x) - \bp_{N_{\overline{x}_C}\overline{\Gamma}}(\overline{x}_C) |\\
    &\leq \|\cF\|_{C^2} \left(| \bp_{N_{\overline{x}_C}\overline{\Gamma}}(x)| +| \bp_{N_{\overline{x}_C}\overline{\Gamma}}(\overline{x}_C) |\right)\\
    \overset{\eqref{E:bdr_assump}}&{\leq} \|\cF\|_{C^2} \left(| \bp_{N_{\overline{x}_C}\overline{\Gamma}}(x)| +\kappa|\overline{x}_C|^{1+\alpha} \right)\\
    &\leq \|\cF\|_{C^2} \left(| (\bp_{N_{\overline{x}_C}\overline{\Gamma}} - \bp_{\pi^{\perp}})(x)| + |\bp_{\pi^{\perp}}(x) | +\kappa r^{1+\alpha} \right)\\
    &\leq \|\cF\|_{C^2} \left(| (\bp_{N_{\overline{x}_C}\overline{\Gamma}} - \bp_{\pi^{\perp}})(x)| + \dist(x,\pi) +\kappa r^{1+\alpha} \right)\\
    \overset{\eqref{E:bdr_assump}}&{\leq} \|\cF\|_{C^2} \left( \kappa |\overline{x}_C|^{\alpha}|x| + \dist(x,\pi) +\kappa r^{1+\alpha} \right)\\
    &\leq 4\|\cF\|_{C^2}\left(\dist(x,\pi) + \kappa r^{1+\alpha}\right).
\end{aligned}
\end{equation*}

The chain of inequalities above with \eqref{E:R1R2R3} provides the following estimate
\begin{equation*}\begin{aligned}
|R_1|+|R_2|+|R_3| &\leq \frac{K_{\cF}C}{2}r^m\bE(\pi,0,r) + r^2\|\cH_{\cF}\|_{\mathrm{L}^2(\oball{2r})}^2  \\
&\quad +c_2(m,n,\|\cF\|_{C^2},\cW)\left(\kappa^2r^{m+2\alpha} + \int_{\oball{2r}}\frac{\dist^2(x,\pi)}{r^2}\mathrm{d}\|\bV\|\right).
\end{aligned}\end{equation*}

Combining this inequality with \eqref{utile}, and recalling \eqref{somma}, we can reabsorb $\frac{K_{\cF}C}{2}r^m\bE(\pi,0,r)$ on the left hand side and conclude the proof of \eqref{cacc0}.
\end{proof}

\section{Excess decay at boundary points}

In this section, we will work under the following \cref{assump:H_u-L_p}. It is clear that \cref{assump:H_u-L_p} is more restrictive than \cref{assump:reg_theory}.

\begin{assumption}\label{assump:H_u-L_p}
We assume \cref{assump:general}. Additionally, $\bV = \bV[u]$ and $\cH_{\cF} = \cH_{\cF}[u]$, where $u:\Omega\subset\obball{r_0}\subset\R^m\to\R^n$ is a Lipschitz function with $\cH_{\cF}[u]\in\mathrm{L}^{p}(\Omega)$ for some $p>m$. We also set $\Gamma\cap\oball{r_0} = \partial(\graph(u))\cap\oball{r_0}$ and $\partial \Omega = \bp(\Gamma)$ splits $\obball{r_0}$ into two disjoint open sets, namely $\Omega$ and $\obball{r_0}\setminus\Omega$. Moreover, there exists $\sigma\in (0,1)$ such that for every $r\in(0,r_0)$, we have
\begin{equation}\label{massratio}
    \frac{\|\bV[u]\|(\oball{r})}{\omega_m r^m} \leq \frac{1}{2} + \sigma.
\end{equation}
\end{assumption}

We recall a lemma that relates the stationarity of the function $u$ with the stationarity of the varifold $\bV[u]$ induced by $u$. This lemma is proved in \cite{de2019geometric} for the case of interior points. Let us set the notation to state it:
\begin{equation}\label{AandI}
\begin{aligned}
\cA(L) & :=\sqrt{M(L)^tM(L)},\quad  \cI_{\cF}(L) & :=\cA(L) \cF\left(h(L)\right) \qquad \forall L\in\R^m\otimes\R^n.
\end{aligned}
\end{equation}
where $h$ and $M(L)$ are defined in \cref{R:def-h}.

\begin{lemma}\label{L:technical}
Assume \cref{assump:H_u-L_p}. If for some positive constants $C$ and $q\geq 1$ it holds 
\begin{equation*}
    |\delta_{\cF}\bV[u](g)| \leq C \|g\|_{\mathrm{L}^q(\obball{r_0}\times\R^m)}, \quad \forall g\in C^{1}_c\left(\obball{r_0}\times\R^m,\R^{m+n}\right) \text{ with } g|_{\Gamma}\equiv 0,
\end{equation*} 
then there exists $C^{\prime}=C^{\prime}(C, m, p, q)>0$ such that
\begin{equation}\label{useful0}
    \left | \int_{\Omega}\langle D\cI_{\cF}(Du),D\zeta \rangle \mathrm{d}\cH^m\right| \leq C^{\prime} \|\zeta \cA^{\frac{1}{q}}(Du)\|_{\mathrm{L}^q(\obball{r_0})}, \quad \forall \zeta\in C^{1}_c\left(\obball{r_0}, \R^n\right) \text{ with } \zeta|_{\bp(\Gamma)}\equiv 0.
\end{equation}

Moreover, if $C=0$, thus $C^{\prime}=0$.
\end{lemma}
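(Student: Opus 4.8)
The plan is to reduce the statement for the varifold $\bV[u]$ to an Euler--Lagrange-type inequality for the Lipschitz function $u$ by unwinding the definitions of the anisotropic first variation and the push-forward under the graph map, exactly as is done for interior points in \cite{de2019geometric}, while keeping track of the fact that all test vector fields and test functions are required to vanish on $\Gamma$ and $\bp(\Gamma)$ respectively. First I would fix the graph parametrization $\Phi:\Omega\to\R^{m+n}$, $\Phi(x)=(x,u(x))$, which is Lipschitz, so that $\bV[u]=\Phi_\sharp(\cH^m\res\Omega)$ in the varifold sense, with tangent plane $T_{\Phi(x)}\graph(u)=h(Du(x))$ for a.e.\ $x$ and $m$-Jacobian $J\Phi(x)=\cA(Du(x))$ (this is the standard area formula for graphs; see \cite{de2019geometric, de2020regularity}). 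Then by \cref{P:1var-int}, since we have reduced to the autonomous case so that the $D_x\cF$ term is absent, for any $g\in C^1_c(\obball{r_0}\times\R^m,\R^{m+n})$ vanishing on $\Gamma$,
\begin{equation*}
\delta_{\cF}\bV[u](g)=\int_{\Omega}\cB_{\cF}(h(Du(x))):Dg(x,u(x))\,\cA(Du(x))\,\mathrm{d}\cH^m(x).
\end{equation*}

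Next I would choose test fields of the special form $g(x,y)=(0,\zeta(x))$ with $\zeta\in C^1_c(\obball{r_0},\R^n)$ and $\zeta|_{\bp(\Gamma)}\equiv 0$; such a $g$ vanishes on $\Gamma$ precisely because $\Gamma\subset\bp^{-1}(\bp(\Gamma))$, so the hypothesis of the lemma applies and gives $|\delta_{\cF}\bV[u](g)|\le C\|g\|_{\mathrm{L}^q}$, with $\|g\|_{\mathrm{L}^q(\obball{r_0}\times\R^m)}$ comparable (up to a factor depending on $\|u\|_{\mathrm{Lip}}$, since $g$ is supported on the graph and $\graph(u)$ has bounded slope) to $\|\zeta\,\cA^{1/q}(Du)\|_{\mathrm{L}^q(\obball{r_0})}$: the weight $\cA^{1/q}(Du)$ appears because pulling back the $q$-norm on the graph to $\Omega$ via $\Phi$ produces the Jacobian factor $\cA(Du)$. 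On the other hand, the chain rule identity $\cB_{\cF}(h(L)):\big(M(L)^t\cdot\partial\big)=\langle D\cI_{\cF}(L),\partial\rangle$ — which is exactly the computation relating the "varifold" and "Cartesian" forms of the first variation used in \cite{de2019geometric} and \cite[Page 470]{de2020regularity} — converts the left-hand integral into $\int_\Omega\langle D\cI_{\cF}(Du(x)),D\zeta(x)\rangle\,\mathrm{d}\cH^m(x)$, since $Dg(x,u(x))$ acting on the graph tangent space is encoded by $D\zeta(x)$ composed with $M(Du(x))$. Combining the two displays yields \eqref{useful0} with $C'=C'(C,m,p,q)$; the dependence on $p$ enters only through the fact that $\cH_{\cF}[u]\in\mathrm{L}^p$ guarantees the relevant integrals are finite, and the final claim that $C=0$ forces $C'=0$ is immediate because the whole argument is linear in the bound $C$.

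The step I expect to be the main obstacle is making the Jacobian bookkeeping between the norm on $\obball{r_0}\times\R^m$ and the weighted norm on $\obball{r_0}$ completely rigorous: one must verify that the constant relating $\|g\|_{\mathrm{L}^q(\obball{r_0}\times\R^m)}$ to $\|\zeta\,\cA^{1/q}(Du)\|_{\mathrm{L}^q(\obball{r_0})}$ depends only on $m,q,\|u\|_{\mathrm{Lip}}$ and, crucially, that the lower bound $\cA(Du)\ge 1$ and the upper bound $\cA(Du)\le (1+\|u\|_{\mathrm{Lip}}^2)^{m/2}$ let one pass freely between the weighted and unweighted integrals without losing track of which power of $q$ is used; this is the point where the Lipschitz (rather than merely Sobolev) hypothesis on $u$ is used. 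A secondary technical point is justifying the use of non-compactly-supported-in-$y$ behaviour: the fields $g(x,y)=(0,\zeta(x))$ are not compactly supported in the $y$-variable, so one either truncates $g$ in $y$ by a cutoff that equals $1$ on a neighbourhood of $\graph(u)\cap\spt\zeta$ and notes that the truncation does not affect the first variation (because $\|\bV[u]\|$ is concentrated on the graph), or one works from the start with the density of $\bV[u]$ on the graph; either way this is routine and does not affect the constants. Since the corresponding interior statement is proved in \cite{de2019geometric}, I would simply indicate these modifications and refer to that proof for the algebraic identities, exactly in the spirit of the proof of \cref{cor:caccioppoli_ineq_graph} above.
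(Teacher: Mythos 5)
Your proposal is correct and takes the same route the paper intends: the paper gives no proof beyond citing \cite[Proposition 6.8]{de2019geometric} as a ``straightforward extension'' to the boundary setting, and your sketch correctly spells out that extension — reducing the first variation to an integral over $\Omega$ via the graph parametrization, choosing vertical test fields $g(x,y)=(0,\zeta(x))$ (which vanish on $\Gamma$ precisely because $\zeta$ vanishes on $\bp(\Gamma)$), converting $\cB_\cF(h(Du)):Dg$ to $\langle D\cI_\cF(Du),D\zeta\rangle$ via the chain rule identity, and tracking the Jacobian factor $\cA(Du)$ that makes $\cA^{1/q}(Du)$ appear on the right-hand side. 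The technical points you flag (vertical truncation of $g$ and the $L^q$ bookkeeping via the area formula) are exactly the ones one must attend to, and your handling of them is sound.
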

The proof of \cref{L:technical} is a straightforward extension of \cite[Proposition 6.8]{de2019geometric} to our boundary setting. Furthermore, \cite[Proposition 6.8]{de2019geometric} can be adapted to give the equivalence between the two properties. However, we choose to state only the exact statement we will use.

We now use the mass ratio bound \eqref{massratio} in \cref{assump:H_u-L_p} to prove the following technical lemma, that will allow us to apply the Caccioppoli inequality (\cref{P:caccioppoli_ineq}). 
\begin{lemma}\label{L:density}
Under \cref{assump:H_u-L_p}, there exists $C_d = C_d(m,n,\alpha)>0$, $c_0=c_0(m,n,\alpha)>0$ and $L_u\in\R^m\otimes\R^n$ with $T_0\Gamma\subset\mathrm{im}(h(L_u))$ such that $\|L_u -(Du)_r\|\leq C_dr^{\alpha} + C_d\sigma$ for any $r\in(0,c_0)$.
\end{lemma}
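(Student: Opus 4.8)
The goal is to exhibit an $m\times n$ matrix $L_u$ whose associated $m$-plane $h(L_u)$ contains $T_0\Gamma$, and which is close (up to errors $r^\alpha$ and $\sigma$) to the average slope $(Du)_r$ of $u$ on $\bball{0}{r}\cap\Omega$. The strategy is to first \emph{produce} $L_u$ purely from the geometry of the boundary $\Gamma$, and then to \emph{compare} it with $(Du)_r$ using the mass ratio bound \eqref{massratio} together with elementary area-formula estimates for graphs. So the first step is to pick $L_u$ to be the slope of the (affine) tangent plane to $\graph(u)$ forced by the boundary: since $\partial\graph(u)=\Gamma$ and $\Gamma\cap\oball{r_0}$ is a $C^{1,\alpha}$ graph over $T_0\Gamma$, the $(m-1)$-plane $T_0\Gamma$ sits inside $\R^m\times\R^n$ in a way that is nearly horizontal (its projection to $\R^m$ is an $(m-1)$-plane $\ell:=\bp(T_0\Gamma)\subset\R^m$), and we can choose $L_u\in\R^m\otimes\R^n$ extending the linear map determined by $T_0\Gamma$ on $\ell$ by $0$ on $\ell^\perp$; by construction $T_0\Gamma\subset\mathrm{im}(h(L_u))$ and $\|L_u\|$ is controlled by $\|u\|_{\mathrm{Lip}}$ (hence by $\Gamma$), so this choice is admissible.

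Next I would estimate $\|L_u-(Du)_r\|$. Write $(Du)_r=\mean_{\bball{0}{r}\cap\Omega}Du$. The point is that $Du$ is close to $L_u$ \emph{on average} because the graph of $u$ cannot deviate too much from the affine plane $x\mapsto L_u x$ without either violating the $C^{1,\alpha}$ boundary condition \eqref{E:bdr_assump} or inflating the mass $\|\bV[u]\|(\oball{r})$ past $\tfrac12+\sigma$. Concretely: the area formula gives $\|\bV[u]\|(\oball{r})=\int_{\pi_{T_0\Gamma}(\graph(u))\cap\cdots} \sqrt{\det(\mathrm{id}+Du^tDu)}\,dx \ge \cH^m(\Omega\cap\bball{0}{r}\cdot\text{(bi-Lipschitz)})$, and since $\bp(\Gamma)$ splits $\obball{r_0}$ into $\Omega$ and its complement with $\Gamma$ a near-affine $C^{1,\alpha}$ hypersurface through $0$, $\cH^m(\Omega\cap\bball{0}{r})/(\omega_m r^m)$ is $\tfrac12+O(\kappa r^\alpha)$. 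Comparing with \eqref{massratio} forces $\int_{\bball{0}{r}\cap\Omega}\big(\sqrt{\det(\mathrm{id}+Du^tDu)}-1\big)\,dx\lesssim r^m(\sigma+\kappa r^\alpha)$, and since $\sqrt{\det(\mathrm{id}+Du^tDu)}-1$ is comparable to $\|Du\|^2$ (with constants depending on $\|u\|_{\mathrm{Lip}}$) this yields $\mean_{\bball{0}{r}\cap\Omega}\|Du\|^2\lesssim \sigma+\kappa r^\alpha$ — but only after we know the ``reference'' slope is essentially $0$. To handle a general $L_u$ one applies the same reasoning after the affine change of coordinates in $\R^{m+n}$ that rotates $h(L_u)$ to the horizontal plane (legitimate since $\cF$ and the mass ratio transform controllably and $\|L_u\|$ is bounded): in the new coordinates the boundary is still $C^{1,\alpha}$ with comparable $\kappa$, the mass ratio still satisfies $\tfrac12+\sigma'$ with $\sigma'\lesssim\sigma$, so $\mean_{\bball{0}{r}\cap\Omega}\|Du'-0\|^2\lesssim\sigma+\kappa r^\alpha$, i.e. $\mean_{\bball{0}{r}\cap\Omega}\|Du-L_u\|^2\lesssim\sigma+\kappa r^\alpha$. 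Jensen's inequality then gives $\|(Du)_r-L_u\|\le \big(\mean\|Du-L_u\|^2\big)^{1/2}\lesssim \sqrt{\sigma}+\sqrt{\kappa}\,r^{\alpha/2}$, and absorbing $\kappa$ into the constant (it depends on $\Gamma$) and renaming exponents — or, if one wants the stated $r^\alpha$ and $\sigma$ linearly, by being slightly more careful with the expansion of the Jacobian and using that $(Du)_r$ is itself close to $L_u$ to linearize — produces $\|L_u-(Du)_r\|\le C_d r^\alpha+C_d\sigma$ for $r<c_0$, with $c_0$ chosen so that $c\kappa r^\alpha<\tfrac12$ and the change of coordinates stays nondegenerate.

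The main obstacle, and the step deserving the most care, is the two-sided comparison of $\cH^m(\Omega\cap\bball{0}{r})$ with $\tfrac12\omega_m r^m$: one needs that $\bp(\Gamma)=\partial\Omega$ is, near $0$, a $C^{1,\alpha}$ hypersurface of $\R^m$ through the origin whose tangent is $\bp(T_0\Gamma)$, with the volume defect controlled by $\kappa r^{1+\alpha}$ via \eqref{E:bdr_assump}; this is where the hypothesis ``$\partial\Omega$ splits $\obball{r_0}$ into two disjoint open sets'' in \cref{assump:H_u-L_p} is essential, since it rules out the set $\Omega$ pinching or self-touching at small scales, which would destroy the clean $\tfrac12$. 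A secondary subtlety is keeping track of how $\sigma$ and $\kappa$ interact under the rotation straightening $h(L_u)$: the rotation angle is $O(\|L_u\|)$, which is bounded but not small, so one cannot treat it perturbatively — instead one observes that the mass-ratio bound and the $C^{1,\alpha}$ boundary estimate are \emph{preserved in form} (with structurally equivalent constants) under any fixed rotation, so the argument goes through in the rotated frame verbatim and the conclusion is then rotated back.
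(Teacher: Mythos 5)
Your proposal shares the core mechanism with the paper's proof: the Taylor expansion of the area of a Lipschitz graph combined with the mass-ratio bound $\le\tfrac12+\sigma$ controls $\mint_{\obball{r}\cap\Omega}\|Du\|^2$, and Jensen then gives information about $(Du)_r$. But the route through the argument is genuinely different, and one of your steps has a real gap.

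The paper does not pick $L_u$ from the geometry of $T_0\Gamma$ and then rotate $h(L_u)$ to the horizontal. Instead it \emph{straightens} the boundary, i.e.\ it applies a diffeomorphism of the domain $\R^m$ (and a subtraction in the $\R^n$-fibre) so that $\Gamma=\oball{r_0}\cap(\R^{m-1}\times\{0\})$, without ever tilting the graphing direction. It then simply sets $L_u:=\lim_{s\to 0}(Du)_s$, which satisfies $T_0\Gamma\subset\mathrm{im}(h(L_u))$ because $u$ vanishes on $\partial\Omega$ and hence the tangential derivatives of $u$ along $\R^{m-1}$ vanish. Your construction of $L_u$ (extend the map forced by $T_0\Gamma$ on $\bp(T_0\Gamma)$ by zero transversally) is legitimate as a candidate, but the subsequent rotation of $h(L_u)$ to the horizontal is where the proposal breaks: after that rotation the surface is no longer automatically a Lipschitz graph over $\R^m\times\{0\}$. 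For graphicality one would need roughly $\|L_u\|\cdot\|u\|_{\mathrm{Lip}}<1$, which is not assumed since both are only controlled by $\|u\|_{\mathrm{Lip}}$; you acknowledge that the rotation angle is "bounded but not small" but assert that the estimates are "preserved in form," which papers over exactly this graphicality obstruction. The mass ratio is rotation-invariant, but the Jacobian Taylor expansion is not usable unless the object remains a graph. The paper's straightening sidesteps this entirely.

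You also correctly flag the second soft spot: Jensen gives $\|(Du)_r-L_u\|\lesssim\sqrt{\sigma}+r^{\alpha/2}$, not the stated $\sigma+r^\alpha$, and your "by being slightly more careful with the expansion of the Jacobian" is a placeholder, not an argument. (For what it is worth, the paper's own "straightforwardly derive" at the analogous step is equally terse and appears to carry the same square root; the form of bound actually needed downstream, cf.\ the choice of $r_1$ and $\sigma$ in \eqref{E:reg:choice:r1-eps}, is robust to this.) So: same idea, different and in one place flawed execution; to repair it, replace the rotation by the paper's straightening and take $L_u=\lim_{s\to0}(Du)_s$, which is available precisely because of the Jensen bound you already derived.
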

\begin{proof}
Without loss of generality, we can assume that $\Gamma = \oball{r_0}\cap\R^{m-1}\times\{0\}$ by a standard procedure of straightening the boundary (for instance, using \cite[Lemma 3.1]{DNS2}). By the Taylor expansion of the mass (c.f. \cite{DS1}), we obtain that
\begin{equation*}
    C_0r^{m+\alpha} + 2\left(\|\bV[u]\|(\oball{r}) - \frac{\omega_m r^m}{2}\right) \geq  \int_{\{x_m\geq0\}\cap\obball{r}}\|Du\|^2.
\end{equation*}

Thus the control over the mass ratio enables us to straightforwardly derive that
\begin{equation*}
    \|(Du)_r\| \leq C_0 r^{\alpha} + 2\sigma.
\end{equation*}

We choose $L_u:=\lim_{s\to 0}(Du)_s$ which, by the last inequality, satisfies the desired inequality. It is easy to see that $T_0\Gamma = \R^{m-1}\times \{0\}\subset\mathrm{im}(h(L_u))$, since $Du(x) = x_mv_0$ for any $x=(x^{\prime},x_m)\in\R^{m-1}\times \{0\} $ and a fixed $v_0\in\R^n$.
\end{proof}

One of the crucial parts of the regularity theory is to prove an excess decay with a precise rate of decay. Let us fix the following shorthand notation for the excess of the function $u$:
\begin{equation}\label{E:def-excess}
\begin{aligned}
    E(x,r,L):=\mint_{\bball{x}{r}\cap\Omega}\|Du(z) - L  \|^2\mathrm{d}z, \\
    E(x,r):=E(x,r,\left(Du\right)_{r}), \text{ and } E(r):= E(0,r).    
\end{aligned}
\end{equation}

We now prove the excess decay at boundary points for the function $u$, i.e., we prove that the derivative $Du$ of $u$ becomes closer in $\mathrm{L}^2$-norm to a linear map as we decrease the radius of balls centered at the origin. The proof follows a similar argument as the one for \cite[Proposition 4.5]{de2020regularity}.

\begin{proposition}[Excess decay]\label{P:excess-decay}
Under \cref{assump:H_u-L_p}, there exists a positive constant $C_e=C_e(m,n,\|\cF\|_{C^2}, K_{\cF}, \|u\|_{\mathrm{Lip}}, \Gamma)>0$ with the following property. For every $\varepsilon \in (0, 4^{-1}C_u^{-1})$, there exist $\delta_*=\delta_*(\varepsilon)>0$ such that, for every  $\delta \in (0,\delta_*)$, $r>0$ and $L\in \R^m\otimes\R^n$ satisfying $T_0\Gamma\subset \mathrm{im}(h(L))$ and 
\begin{equation}\label{E:excess_decay:assump}
\begin{aligned}
    r^{\min\{\alpha,1-\frac{m}{p}\}}\|\cH_{\cF}[u]\|_{\mathrm{L^p}(\Omega\times\R^n)} \leq E(r,L)\leq \delta,
\end{aligned}
\end{equation}
then there exists $\tilde{L}\in\R^m\otimes\R^n$ with $T_0\Gamma\subset \mathrm{im}(h(\tilde{L}))$ and $\| L - \tilde{L}\| \leq C_e\delta$ such that
\begin{equation}\label{E:excess_decay}
   E(\varepsilon r, \tilde{L}) \leq C_e \varepsilon^{2\alpha} E(r, L).
\end{equation}
\end{proposition}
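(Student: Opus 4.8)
The plan is to follow the classical blow-up scheme of De Giorgi–Almgren, adapted to the boundary setting as in \cite[Proposition 4.5]{de2020regularity}, combining the Caccioppoli-type inequality \cref{cor:caccioppoli_ineq_graph} with a compactness argument that produces a harmonic-type limit.

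\textbf{Step 1: Reduction and normalization.} First I would straighten the boundary so that $\Gamma\cap\oball{r_0}=\R^{m-1}\times\{0\}$ and $T_0\Gamma=\R^{m-1}\times\{0\}$, exactly as in the proof of \cref{L:density}, and reduce to an autonomous integrand $\cF$ as in the proof of \cref{P:caccioppoli_ineq}. Normalizing, it suffices to prove the statement for $r=1$; the general case follows by the scaling $u_r(x):=r^{-1}u(rx)$, under which $E(\cdot,\cdot,\cdot)$ is invariant and $\|\cH_{\cF}[u_r]\|_{\mathrm{L}^p}$ picks up the factor $r^{1-m/p}$, which is why the left inequality in \eqref{E:excess_decay:assump} is phrased with the exponent $\min\{\alpha,1-m/p\}$.

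\textbf{Step 2: Contradiction / blow-up.} Suppose the conclusion fails. Then there are sequences $u_k$, $L_k$, $\delta_k\downarrow 0$ with $\lambda_k^2:=E(1,L_k)\le\delta_k$, satisfying $\|\cH_{\cF}[u_k]\|_{\mathrm{L}^p}\le\lambda_k^2$ (after the rescaling, the $\mathrm{L}^p$ norm of the curvature is bounded by $E(1,L_k)=\lambda_k^2$), but for which no admissible $\tilde L$ with $\|L_k-\tilde L\|\le C_e\lambda_k^2$ (here we must choose $C_e$ at the end, larger than the bound we obtain) gives $E(\varepsilon,\tilde L)\le C_e\varepsilon^{2\alpha}\lambda_k^2$. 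Set $w_k(x):=\lambda_k^{-1}\big(u_k(x)-L_k(x)-(u_k-L_k)_{1}\big)$ so that $\fint_{\obball{1}\cap\Omega_k}\|Dw_k\|^2=1$ (up to lower-order corrections coming from subtracting $(Du_k)_1$ rather than $L_k$; by \cref{L:density} $\|L_k-(Du_k)_1\|$ is controlled, and since $L_k$ is the admissible linear map in \eqref{E:excess_decay:assump} this is benign). The Caccioppoli inequality \cref{cor:caccioppoli_ineq_graph} applied at scales $r<1$, together with $\|\cH_{\cF}[u_k]\|_{\mathrm{L}^p}\le\lambda_k^2$ and $\kappa^2 r^{2\alpha}$ being $o(\lambda_k^2)$ only after one extra reduction step — here one uses that the rescaled boundary constant $\kappa_k$ also tends to $0$, or absorbs the $\kappa^2 r^{2\alpha}$ error using the first hypothesis in \eqref{E:excess_decay:assump} which forces $\kappa r^{\alpha}\lesssim\lambda_k$ — gives a uniform $W^{1,2}$ bound for $w_k$ on $\obball{1/2}\cap\{x_m\ge 0\}$ and hence (by Rellich) a subsequential $\mathrm{L}^2$-limit $w_\infty$ with $Dw_k\rightharpoonup Dw_\infty$ weakly in $\mathrm{L}^2$.

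\textbf{Step 3: The limit solves a constant-coefficient elliptic system with zero boundary data.} Passing to the limit in the Euler–Lagrange equation \eqref{useful0} from \cref{L:technical} (with $q=2$, say, after an interpolation to reduce the exponent if necessary), dividing by $\lambda_k$ and using that $\cH_{\cF}[u_k]/\lambda_k$ is bounded in $\mathrm{L}^p$ hence the right-hand side tends to $0$, one finds that $w_\infty$ is a weak solution of $\mathrm{div}(D^2\cI_{\cF}(L_\infty)[Dw_\infty])=0$ in $\obball{1/2}\cap\{x_m>0\}$, with $w_\infty=0$ on $\{x_m=0\}$ (the Dirichlet condition survives the limit because $\zeta|_{\bp(\Gamma)}\equiv 0$). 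Here $L_\infty=\lim L_k$ exists up to subsequence, and the coefficients $D^2\cI_{\cF}(L_\infty)$ are a constant strictly elliptic tensor: strict ellipticity (Legendre–Hadamard, in fact the stronger strong ellipticity) is exactly what USAC buys, via \cite{de2020equivalence} and \cref{D:USAC}. Boundary Schauder/De Giorgi estimates for such systems with flat boundary and zero data give the decay $\fint_{\obball{\varepsilon}\cap\{x_m>0\}}\|Dw_\infty-Dw_\infty(0)\|^2\le C\varepsilon^2\fint_{\obball{1/2}\cap\{x_m>0\}}\|Dw_\infty\|^2$; moreover, since $w_\infty$ vanishes on $\{x_m=0\}$, $Dw_\infty(0)$ has the block-triangular form making $h(L_\infty+\lambda_k Dw_\infty(0))$ contain $T_0\Gamma$, which is what yields the admissible competitor $\tilde L_k:=L_k+\lambda_k Dw_\infty(0)$.

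\textbf{Step 4: Contradiction.} Strong $\mathrm{L}^2$ convergence $Dw_k\to Dw_\infty$ on $\obball{\varepsilon}$ — which one upgrades from weak convergence exactly as in \cite[Proposition 4.5]{de2020regularity}, using the Caccioppoli inequality again to control the tails together with the quadratic structure — transfers the $\varepsilon^2$-decay (with a $2\alpha$, not $2$, because the $\kappa r^\alpha$ boundary error and the sub-optimal first hypothesis in \eqref{E:excess_decay:assump} only allow exponent $2\min\{\alpha,\dots\}\ge 2\alpha$ after choosing $\alpha\le 1$) back to $w_k$, hence to $u_k$ with competitor $\tilde L_k$, for $k$ large. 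Since $\|L_k-\tilde L_k\|=\lambda_k\|Dw_\infty(0)\|\le C\lambda_k\le C\lambda_k^2\cdot\lambda_k^{-1}$, choosing $C_e$ larger than $C$ and $\delta_*$ small enough that $C_e\lambda_k^2$ dominates, this contradicts the assumed failure.

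\textbf{Main obstacle.} The delicate point is the handling of the boundary term $\kappa^2 r^{2\alpha}$ in \cref{cor:caccioppoli_ineq_graph} and of the non-flatness of $\Gamma$ under rescaling: unlike the interior case, this term does not scale away for free, and one must exploit the left-hand inequality $r^{\min\{\alpha,1-m/p\}}\|\cH_{\cF}[u]\|_{\mathrm{L}^p}\le E(r,L)$ — or equivalently arrange that the rescaled boundary constant is comparable to $\lambda_k$ — to ensure that, after normalization, $\kappa_k\to 0$ so that the boundary error is $o(1)$ relative to the normalized excess. Getting the bookkeeping of these error terms right, and correspondingly the exponent $2\alpha$ rather than $2$, is the crux; the compactness and elliptic-regularity parts are standard given USAC and \cref{L:technical}.
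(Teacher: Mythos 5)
Your proposal follows the same blow-up / contradiction scheme the paper uses (and both mirror \cite[Proposition 4.5]{de2020regularity}): rescale $\lambda_k^{-1}(u_k-L_k)$ to a weakly convergent $W^{1,2}$ sequence, identify the limit as a solution of a constant-coefficient strongly elliptic system with zero Dirichlet data on the flattened boundary via \cref{L:technical}, invoke boundary elliptic estimates, and close with the Caccioppoli inequality \cref{cor:caccioppoli_ineq_graph}. You also correctly spot the crux: the lower bound $r^{\min\{\alpha,1-m/p\}}\|\cH_{\cF}[u]\|_{\mathrm{L}^p}\le E(r,L)$ is what keeps the boundary error $\kappa^2(\varepsilon r_k)^{2\alpha}/\lambda_k^2$ under control along the blow-up.

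Three aspects of the write-up, though, need attention. (i) The opening reduction ``it suffices to prove the statement for $r=1$'' is not an honest normalization on its own: the $\kappa^2 r^{2\alpha}$ term does not scale away, so the argument must genuinely be a blow-up with $r_k\to0$ (the rescaled boundary flattening). You acknowledge this later, but Step 1 as written would mislead. (ii) Subtracting the mean in $w_k:=\lambda_k^{-1}(u_k-L_k-(u_k-L_k)_1)$ destroys the zero-trace property on $\bp(\Gamma)$ that you invoke both in Step 3 (to get $w_\infty|_{\{x_m=0\}}\equiv0$) and at the end for the Poincar\'e inequality. The paper's rescaling $u_j=(u(r_j\cdot)-r_jL_j\cdot)/(\delta_jr_j)$ keeps the vanishing trace, since $u$ and $L_j$ both vanish on the straightened $\bp(\Gamma)$, and Poincar\'e--Wirtinger is used only to bound the functions in $L^2$; no mean subtraction is wanted. (iii) Step 4 is framed as ``upgrading $Dw_k\rightharpoonup Dw_\infty$ to strong $L^2$ convergence,'' which is neither how the paper nor \cite[Proposition 4.5]{de2020regularity} proceeds, nor is such an upgrade cheap. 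What actually closes the argument is applying \cref{cor:caccioppoli_ineq_graph} once more at scale $\varepsilon r_k$ with the competitor $\tilde L_k$: this converts the gradient excess into the height excess $\varepsilon^{-2}\mint|w_k-Pz|^2$ plus curvature and boundary errors, and the height excess passes to the limit by Rellich (strong $L^2$ convergence of the \emph{functions}), after which Poincar\'e for the zero-trace $w_\infty-P_0z$ and the second-derivative estimate yield the $\varepsilon^2$-decay. Your competitor $\tilde L_k=L_k+\lambda_k Dw_\infty(0)$, taken from the limit rather than the paper's $\tilde L_j=L_j+\delta_j P_j$ built from the sequence, is legitimate for the reason you give (the $z_m$-only perturbation preserves $T_0\Gamma\subset\mathrm{im}(h(\cdot))$), but it is the extra Caccioppoli step, not a strong gradient limit, that connects it back to $E(\varepsilon r_k,\tilde L_k)$.
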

\begin{proof}

As in the proof of \cref{P:caccioppoli_ineq}, we can again assume without loss of generality that $\cF$ is an autonomous functional.

We prove our statement by a contradiction argument. Assume that for every $C_e >0$ there exist $\varepsilon\in (0, 4^{-1}C_u^{-1})$ such that, for any $j\in \N$, there exists $L_j\in\R^m\otimes\R^n$ and $r_j>0$ satisfying
\begin{equation}\label{E:excess_decay:contradiction_assump1}
\begin{aligned}
    r_j^{\min\{\alpha,1-\frac{m}{p}\}}\|\cH_{\cF}[u]\|_{\mathrm{L^p}(\Omega\times\R^n)} \leq E(r_j,L) \leq \frac 1j,
\end{aligned}
\end{equation}
and, for every $\tilde{L}\in\R^m\otimes\R^n$ with $T_0\Gamma\subset \mathrm{im}(h(\tilde{L}))$ and $\| L_j - \tilde{L}\| \leq \frac {C_e}j$, it holds
\begin{equation}\label{E:excess_decay:contradiction_assump2}
    E(\varepsilon r_j, \tilde{L}) > C_e \varepsilon^{2\alpha} E(r_j,L_j).
\end{equation}
For every $j$, we choose $\tilde{L}$ to be a specific $\tilde{L}_j$, to be fixed later in \eqref{Ltilde}.

We divide our proof into three steps. In Step 1, we prove that a certain blowup sequence for $u$ weakly converges in $\mathrm{W}^{1,2}(\obball{r}\cap\Omega)$ to a limit function $u_0$. After that, we show in Step 2 that $u_0$ is a weak solution of an elliptic system of PDEs, subsequently we use regularity theory for elliptic PDEs to obtain an estimate for the second derivative of $u_0$. We close our argument in Step 3, where we apply \cref{cor:caccioppoli_ineq_graph}, together with the elliptic estimates from Step 2, to get a contradiction with \eqref{E:excess_decay:contradiction_assump2}.

\textbf{Step 1:} By \eqref{E:excess_decay:contradiction_assump1}, we can assume $r_j\to 0$. We choose $\delta_j^2 := E(r_j,L_j)$. We set $\Omega_j:= r^{-1}_j \Omega$, $\Gamma_j:=r^{-1}_j\Gamma$, and the blowup sequence as follows
\begin{align*}
u_j \colon \Omega_j &\to \R^n \\ 
z &\mapsto \frac{u(r_j z) - r_j L_jz}{\delta_j r_j}.
\end{align*}

We assume that $\delta_j>0$, otherwise there is nothing to prove. It is easy to see that $\Omega_j \to \{x\in\R^m:x_m \geq 0\}$ and $\Gamma_j \to \R^{m-1}$ as $j$ goes to $+\infty$. Furthermore, we list some properties of the sequence $u_j$ that will be used in this proof. They are:
\begin{enumerate}[\upshape (a)]
    \item\label{(a)} $Du_j(z) = \delta_j^{-1}\left( Du(r_jz) - L_j\right)$, which is a trivial computation;
    
    \item\label{(b)} $(Du_j)_1 = \delta_j^{-1}((Du)_{r_j} - L_j)$, which is a straightforward consequence of \cref{(a)};
    
    \item\label{(c)} $\mint_{\obball{1}\cap\Omega_j}\|Du_j\|^2 = \delta_j^{-2}E(r_j,L_j) = 1$, which follows changing variables and using \cref{(a)};
    
    \item\label{(d)} $\int_{\obball{1}\cap\Omega_j}\|u_j - (u_j)_1\|^2$ is uniformly bounded. This follows from the Poincar\`e-Wirtinger inequality and \cref{(c)};
    
    \item\label{(e)} $E_{u_j}(\varepsilon, \delta_j^{-1}(L_j - \tilde{L}_j)) > C_e \varepsilon^{2\alpha}E_{u_j}(1, 0) = C_e\varepsilon^{2\alpha}\delta_j^2$, where we set $E_{u_j}(r, L)$ to be the excess $E_{u_j}(r, L):=\mint_{\obball{r}\cap\Omega_j}\|Du_j(z) - L \|^2\mathrm{d}z$. This item follows from \eqref{E:excess_decay:contradiction_assump2}, \cref{(a)}, and the definition of $u_j$.
\end{enumerate}

For every $s>0$, denote the halfball $\mathrm{B}^+_s:= \obball{s}\cap \{(x^{\prime}, x_m)\in\R^{m-1}\times\R: x_m >0 \}$. As a consequence of \cref{(c)} and \cref{(d)}, we obtain that $(u_j)$ is uniformly bounded in $\mathrm{W}^{1,2}(\obball{1}^+)$. Since $\mathrm{W}^{1,2}(\obball{1}^+)$ is reflexive, we can assume that
\begin{equation}\label{convergence00} 
    u_j \rightharpoonup u_0 \text{ in } \mathrm{W}^{1,2}(\obball{1}^+) \text{ and } u_j\to u_0 \text{ in } \mathrm{L}^2(\obball{1}^+). 
\end{equation}

By classical trace theory, c.f. \cite[Section 5.5]{evans2010partial}, we have the following convergence 
\begin{equation}\label{trace00}
    u_j \to u_0 \text{ in } \mathrm{L}^2(\R^{m-1}\cap\obball{1}^+) \Rightarrow u_0|_{\R^{m-1}\cap\obball{1}} \equiv 0.
\end{equation}

Moreover, we also have that there exists a matrix $(Du)_0$ such that $(Du)_{r_j} \to (Du)_0$ thanks to the fact that $\{(Du)_{r_j}\}_{j\in\N}$ is equibounded.

\textbf{Step 2:} We start defining, for all $A\in\R^n\otimes\R^m$, the following sequence of operators
\begin{equation}\label{E:def_Ij}
\cI_j(A) := \frac{1}{\delta_j^2}\left[ \cI_{\cF}(\delta_j A + (Du)_{r_j}) - \cI_{\cF}((Du)_{r_j}) -\delta_j\langle D\cI_{\cF}((Du)_{r_j}),A\rangle \right],    
\end{equation}
where $\cI_{\cF}$ is defined above in \cref{AandI}. One can check that $\cI_j(A) \to D^2\cI_{\cF}((Du)_0)[A,A]$ in the $C^2$-topology. We now claim that $u_0$ is a weak solution of an elliptic system of PDEs, namely,
\begin{equation}\label{E:PDE}
\int_{\obball{1}^+} D^2\cI_{\cF}(L_u)[Du_0, D\zeta]\mathrm{d}\cH^m = 0 \text{ for all } \zeta \in C_c^{\infty}(\obball{1}^+, \R^n), \quad \zeta|_{\bp(\Gamma)} \equiv 0.
\end{equation}

For the fluency of the text, we let the proof of this claim to the end. Since $u_0$ is a weak solution of the linear elliptic system in \eqref{E:PDE}, we use boundary estimates for solutions of linear elliptic systems, see \cite[Theorem 4.2]{beck2007partial}, to get 
\begin{equation}\label{E:excess-decay:bound2der}
\begin{aligned}
    \sup_{\mathrm{B}^+_{1/2}}\| D^2 u_0 \|^2 \leq C_0\|Du_0\|^2_{\mathrm{L}^2(\mathrm{B}^+_{1})} \overset{\textup{\cref{(c)}}}{\leq} C_0.
\end{aligned}
\end{equation}

\textbf{Step 3:} By \cref{(e)}, we have that 
\begin{equation}\label{E:excess-decay:contradictionITEMe}
    C_e\varepsilon^{\alpha} \leq \frac{1}{\delta_j^2} E_{u_j}(\varepsilon, \delta_j^{-1}(L_j - \tilde{L}_j)).
\end{equation}
Our goal now is to strictly bound the right-hand side in the inequality above by $C\varepsilon^{2\alpha}$ to derive a contradiction choosing the constants appropriately. To this end, we apply the Caccioppoli inequality as follows: 
\begin{equation}\label{E:excess-decay:Caccioppoli}
\begin{aligned}
    \frac{1}{\delta_j^2} E_{u_j}(\varepsilon, \delta_j^{-1}(L_j - \tilde{L}_j)) \overset{\textup{Cor. \ref{cor:caccioppoli_ineq_graph}}}&{\leq} \frac{1}{\delta_j^2}\biggl( \frac{1}{\varepsilon^2 r_j^2}\mint_{\obball{C_u\varepsilon r_j}\cap\Omega}| u(z) - \tilde{L}_jz|^2\mathrm{d}z \biggr.\\
    &\quad \biggl. + C_0(\varepsilon r_j)^{2-\frac{2m}{p}}\|\cH_{\cF}[u]\|^2_{\mathrm{L}^p} + \kappa^2(\varepsilon r_j)^{2\alpha}\biggr).
\end{aligned}
\end{equation}
As a consequence of \eqref{E:excess_decay:contradiction_assump1}, it holds
\begin{equation}\label{E:excess-decay:MC+BDR}
    \limsup_{j\to +\infty}\left(C_0\frac{(\varepsilon r_j)^{2-\frac{2m}{p}}\|\cH_{\cF}[u]\|^2_{\mathrm{L}^p}}{\delta_j^{2}} +\kappa^2 \frac{(\varepsilon r_j)^{2\alpha}}{\delta_j^{2}}\right) \leq C_b\kappa^2\varepsilon^{2\alpha}.
\end{equation}
So, we are left to bound the limit, as $j$ goes to $+\infty$, of the integral in the right-hand side of \eqref{E:excess-decay:Caccioppoli}. To this aim, we apply a change of variables and use the definition of $u_j$ to obtain
\begin{equation}\label{E:excess-decay:termo_difficile}
    \frac{1}{\delta_j^2\varepsilon^2 r_j^2}\mint_{\obball{C_u\varepsilon r_j}\cap\Omega}| u(z) - \tilde{L}_jz|^2\mathrm{d}z = \frac{1}{\varepsilon^2} \mint_{\obball{C_u\varepsilon}\cap\Omega_j}\biggl| u_j(z) - \frac{\tilde{L}_jz - L_jz}{\delta_j}\biggr|^2\mathrm{d}z
\end{equation}
We now choose the specific $\tilde{L}_j$'s as below:
\begin{equation}\label{Ltilde}
    \tilde{L}_jz := - L_jz + \delta_j P_jz, \ \text{  where  } \ P_jz:=C_o \varepsilon^{-2} z_m \mint_{\obball{C_u\varepsilon}^+}y_mDu_j(y)\mathrm{d}y,
\end{equation}
where $C_o := C_u^2\left(\mint_{\obball{1}^+}z_m^2\mathrm{d}z\right)^{-1}$.
$\tilde{L}_j$'s can be readily proven to satisfy $T_0\Gamma\subset \mathrm{im}(h(\tilde{L}_j))$ and $\| L_j - \tilde{L}_j\| \leq C_e\frac 1j$.  We recall that $P_j$ is the unique minimizer of $P\mapsto \mint_{\obball{C_u\varepsilon}^+}|Du_j(z) - P|^2\mathrm{d}z$ among linear functions depending only on $z_m$, see \cite[Section 2]{beck2009boundary}. By \eqref{convergence00} and the choice of $\tilde{L}_j$, passing equation \eqref{E:excess-decay:termo_difficile} to the limit in $j$, we deduce
\begin{equation}\label{E:excess-decay:termo_difficile2}
\begin{aligned}
        \frac{1}{\delta_j^2\varepsilon^2 r_j^2}\mint_{\obball{C_u\varepsilon r_j}\cap\Omega}| u(z) - \tilde{L}_jz|^2\mathrm{d}z &= \frac{1}{\varepsilon^2} \mint_{\obball{C_u\varepsilon}\cap\Omega_j}\left| u_j(z) - P_jz\right|^2\mathrm{d}z \\
        \underset{j\to+\infty}&{\longrightarrow} \frac{1}{\varepsilon^2} \mint_{\obball{C_u\varepsilon}\cap\Omega_j}\left| u_0(z) - P_0z\right|^2\mathrm{d}z,
\end{aligned}
\end{equation}
where we denote $P_0z := C_o z_m \mint_{\obball{C_u\varepsilon}^+}y_mDu_0(y)\mathrm{d}y$. Combining \eqref{E:excess-decay:contradictionITEMe}, \eqref{E:excess-decay:Caccioppoli}, \eqref{E:excess-decay:MC+BDR}, and \eqref{E:excess-decay:termo_difficile2}, we get
\begin{equation*}
    C_e\varepsilon^{2\alpha} \leq C_b\kappa^2\varepsilon^{2\alpha} + \frac{1}{\varepsilon^2} \mint_{\obball{C_u\varepsilon}^+}\left| u_0(z) - P_0z\right|^2\mathrm{d}z \overset{\textup{Poinc. Ineq.}}{\leq} C_b\kappa^2\varepsilon^{2\alpha}+ \mint_{\obball{C_u\varepsilon}^+}\left| Du_0(z) - P_0\right|^2\mathrm{d}z,
\end{equation*}
where the Poincar\'e inequality can be used thanks to the fact that $u_0 - P_0$ has zero trace on $\R^{m-1}\times \{0\}$ by \eqref{trace00}. Recalling that $P_0$ is optimal, as mentioned before, we have that 
\begin{equation*}
\begin{aligned}
    \mint_{\obball{C_u\varepsilon}^+}\left| Du_0(z) - P_0\right|^2\mathrm{d}z &\leq \mint_{\obball{C_u\varepsilon}^+}\left| Du_0(z) - (Du_0)_{C_u\varepsilon}\right|^2\mathrm{d}z \\
    &\leq C_u^2\varepsilon^2\mint_{\obball{C_u\varepsilon}^+}\left| D^2u_0(z)\right|^2\mathrm{d}z \overset{\eqref{E:excess-decay:bound2der}}{\leq} C_0C_u^2\varepsilon^2.
\end{aligned}
\end{equation*}
Finally, the last two displayed inequalities provide $C_e \leq C_b\kappa^2 + C_0C_u^2$ which in turn implies the desired contradiction once we choose $C_e$ large enough.

\textbf{Proof of the claim \eqref{E:PDE}:} Without loss of generality we can assume that $\bp(\Gamma) = \obball{r_0}\cap\R^{m-1}$. Indeed, it is a standard procedure of straightening/flattening out the boundary, see \cite[Subsection 3.2.3]{evans2010partial}. If the boundary is not flat, i.e., $\bp(\Gamma) \neq \obball{r_0}\cap\R^{m-1}$, we take a smooth function $\Phi$ such that $\Phi(0)=0, D\Phi(0)= 0$, and $\Phi(\bp(\Gamma)) = \obball{r_0}\cap\R^{m-1}$. So, $u_0\circ\Phi$ satisfies \eqref{E:PDE}, which assures that $u_0$ satisfies a similar elliptic PDE. For more details on this standard argument, we refer the reader to \cite[Subsection 3.2.3]{evans2010partial}.

Fix a flattened boundary $\bp(\Gamma)= \obball{r_0}\cap\R^{m-1}$, denote $\mathrm{B}^+_s:= \obball{s}\cap \{(x^{\prime}, x_m)\in\R^{m-1}\times\R: x_m >0 \}$ for every $s>0$, and $q\in\R$ the conjugate exponent of $p$, i.e., such that $p^{-1}+q^{-1}=1$. Let $\zeta\in C^{\infty}_c(\mathrm{B}^+_1,\R^n)$ a test vector field with $\zeta(z^{\prime}, 0) = 0$ for every $z^{\prime}\in\R^{m-1}$. Then we define the sequence $\zeta_j(z) := \zeta(\frac{z}{r_j})$ which for each $j$ also satisfies $\zeta_j(z^{\prime}, 0) = 0$ for every $z^{\prime}\in\R^{m-1}$. 

Our aim now is to apply \cref{L:technical}. To this aim, we estimate the left-hand side of \eqref{useful0} in \cref{L:technical} as follows
\begin{equation}\label{E:CofV}
\begin{aligned}
        \int_{\mathrm{B}^+_1} \langle D\cI_{\cF}(Du(z)),        D\zeta_j(z)\rangle\mathrm{d}z &= r_j^{-1}\int_{\mathrm{B}^+_1} \langle D\cI_{\cF}(Du(z)), D\zeta\left(\frac{z}{r_j}\right)\rangle\mathrm{d}z \\
        &=r_j^{m-1}\int_{\mathrm{B}^+_1} \langle D\cI_{\cF}(Du(r_j z)), D\zeta\left(z\right)\rangle\mathrm{d}z.
\end{aligned}
\end{equation}

Notice that the domain of integration does not change under the change of variables since $\zeta$ has compact support. Thus, by \eqref{E:CofV}, we obtain that 
\begin{equation}\label{E:LHS}
\begin{aligned}
    \int_{\mathrm{B}^+_1} \langle D\cI_{\cF}(Du(z)), &D\zeta_j(z)\rangle\mathrm{d}z = r_j^{m-1}\int_{\mathrm{B}^+_1} \langle D\cI_{\cF}(Du(r_j z)) -  D\cI_{\cF}((Du)_{r_j}), D\zeta\left(z\right)\rangle\mathrm{d}z \\
    \overset{\eqref{(a)}}&{=} r_j^{m-1}\int_{\mathrm{B}^+_1} \langle D\cI_{\cF}((Du)_{r_j} + \delta_j Du_j(z)) -  D\cI_{\cF}((Du)_{r_j}), D\zeta\left(z\right)\rangle\mathrm{d}z\\
    \overset{\eqref{E:def_Ij}}&{=} \delta_j r_j^{m-1}\int_{\mathrm{B}^+_1} \langle D\cI_j(Du_j(z)), D\zeta\left(z\right)\rangle\mathrm{d}z,
\end{aligned}
\end{equation}
where we used the compactness of the support of $\zeta$ and the divergence theorem for the first equality.
We now focus on the right-hand side of \eqref{useful0}. Recalling the definition of $\cA$ and that $u$ is Lipschitz, we have that $\|\zeta_j \cA^{1/q}(Du) \|_{\mathrm{L}^q(\obball{r}\cap\Omega)} \leq C_0 \|\zeta_j\|_{\mathrm{L}^q(\obball{r}\cap\Omega)}$, where we change variables to get that
\begin{equation}\label{E:RHS}
\|\zeta_j \cA^{1/q}(Du) \|_{\mathrm{L}^q(\obball{r}\cap\Omega)} \leq C_0 r_j{^\frac{m}{q}} \|\zeta\|_{\mathrm{L}^q(\obball{r}\cap\Omega)}.
\end{equation}

We now use \cref{L:technical}, \eqref{E:LHS}, and \eqref{E:RHS}, to derive that
\begin{equation}\label{E:LHS+RHS}
\begin{aligned}
\delta_j r_j^{m-1}\int_{\mathrm{B}^+_1} \langle D\cI_j(Du_j(z)), D\zeta\left(z\right)\rangle\mathrm{d}z &= \delta_j r_j^{m-1}\int_{\mathrm{B}^+_1} \langle D\cI_{\cF}(Du(z)), D\zeta_j(z)\rangle\mathrm{d}z \\
&\leq C^{\prime}\|\zeta_j \cA^{1/q}(Du) \|_{\mathrm{L}^q(\obball{r}\cap\Omega)} \\
&\leq C_0^{\prime} r_j{^\frac{m}{q}} \|\zeta\|_{\mathrm{L}^q(\obball{r}\cap\Omega)}.
\end{aligned}
\end{equation}

Recalling \eqref{E:excess_decay:contradiction_assump1} and the choice of $q$, we easily obtain that
\begin{equation*} C_0^{\prime} \frac{r_j^{\frac{m}{q}-m+1}}{\delta_j} = C_0^{\prime} \frac{r_j^{1 - \frac{m}{p}}}{\delta_j} \leq C_0^{\prime} \frac{\delta_j}{\|\cH_{\cF}[u]\|_{\mathrm{L}^p(\Omega\times\R^n)}},\end{equation*}
which in turn, together with \eqref{E:LHS+RHS}, implies that 
\begin{equation*} \lim_{j\to +\infty}\int_{\mathrm{B}^+_1} \langle D\cI_j(Du_j(z)), D\zeta\left(z\right)\rangle\mathrm{d}z = 0.\end{equation*}

By the very same argument of \cite[Proposition 4.5]{de2020regularity}, we conclude from the previous equation that
\begin{equation*} \int_{\mathrm{B}^+_1}D^2\cI_{\cF}(L_u)[Du_0, D\zeta] = 0, \end{equation*}
for all $\zeta\in C^{\infty}_c(\mathrm{B}^+_1, \R^n)$ with $\zeta(x^{\prime}, 0) = 0, \forall x^{\prime}\in\obball{1}\cap\R^{m-1}$, as claimed in \eqref{E:PDE}.
\end{proof}

\section{Boundary regularity}

By \cref{P:excess-decay}, there exists $C_e=C_e(m,n,\|\cF\|_{C^2}, K_{\cF}, \|u\|_{\mathrm{Lip}}, \Gamma)>0$ with the following property: setting $\gamma:=\min\{\alpha,1-m/p\}$ and
\begin{equation}\label{E:reg:choice-eps}
    \varepsilon < \min\left \{ (12 C_e)^{-\frac{1}{2\alpha}}, C_e^{-\frac{1}{2\alpha - \gamma}}, 2^{-\frac{2}{\gamma}}, 8^{-\frac{1}{\gamma}},\frac{1}{4C_u} \right\},
\end{equation}
then there exists $\delta_* = \delta_*(\varepsilon)>0$ such that for every $\delta \in (0,\delta_*)$ and $r>0$,
\begin{equation}\label{E:reg:excess-decay}
    \begin{cases} \text{if }L\in\R^m\otimes\R^n, T_0\Gamma\subset\mathrm{im}(h(L)), \\ r^{\gamma}\|\cH[u]\|_{p} \leq E(r, L) \leq \delta,\end{cases}\Longrightarrow \begin{cases} \exists \tilde{L}\in \R^m\otimes\R^n\text{ s.t. }T_0\Gamma\subset \mathrm{im}(h(\tilde{L})), \\ \| L- \tilde{L}\| \leq C_e \delta, \\ E(\varepsilon r,\tilde{L}) \leq C_e\varepsilon^{2\alpha} E(r, L). \end{cases}
\end{equation}

We now define the auxiliary excess, which encompasses the mean curvature rather than only the excess $E$, as follows
\begin{equation*}
\begin{aligned}
    e(x, s, L) := E(x, s, L) + \frac{8s^{\min\{\alpha, 1-\frac{m}{p}\}}}{\varepsilon^{m}}\|\cH[u]\|_{p}, \ e(s, L):= e(0,s, L), \ \forall s>0.    
\end{aligned}
\end{equation*}

We prove a decay for $e$ in the next corollary, which is a consequence of the excess decay, \cref{P:excess-decay}. We lastly choose
\begin{equation}\label{E:reg:choice:r1-eps}
    r_1:=\min\left\{\frac{\delta^{1/2\alpha}_*}{6C_d}, \left(\frac{\varepsilon^m\delta_*}{2\|\cH[u]\|_p}\right)^{\frac{1}{\gamma}}\right\} \text{ and } \sigma < \frac{\delta_*}{6C_d}.
\end{equation}

\begin{corollary}\label{C:decay-aux-excess}
Assume \cref{assump:H_u-L_p}, \eqref{E:reg:choice-eps}, and \eqref{E:reg:choice:r1-eps}. Then, for each $j\in\N$, there exists $L_j$ with $T_0\Gamma\subset\mathrm{im}(h(L_j))$ such that $$e(\varepsilon^j r, L_j) \leq 2^{-2j}e(r, L_u)\quad \mbox{ for every $r\in(0,r_1)$},$$
where $\| L_{j+1} - L_{j}\| \leq 2^{-2j} C_e \delta_*$. Hence, there exist $L_\infty := \lim_j L_j$ with $T_0\Gamma\subset\mathrm{im}(h(L_\infty))$, $\eta\in(0,1)$, and $c_e = c_e(r_1,\varepsilon)>0$ such that 
 $$e(r,L_\infty) \leq c_e r^{2\eta} e(r_1, L_u) \quad \mbox{for all $r\in (0,r_1)$}.$$
\end{corollary}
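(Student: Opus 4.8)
The plan is to iterate the excess decay from \cref{P:excess-decay} (in the packaged form \eqref{E:reg:excess-decay}) together with the control on the initial linear approximation coming from \cref{L:density}, taking care that at every scale the hypothesis $r^\gamma\|\cH[u]\|_p \le E(r,L) \le \delta_*$ needed to apply the decay remains satisfied. The auxiliary excess $e$ is designed precisely so that its mean-curvature tail transforms correctly under the scaling $s\mapsto \varepsilon s$: since $\|\cH[u]\|_p$ is a fixed number and the tail carries the factor $s^\gamma/\varepsilon^m$, passing from scale $r$ to $\varepsilon r$ multiplies the tail by $\varepsilon^\gamma$, and the choice $\varepsilon \le 2^{-2/\gamma}$ in \eqref{E:reg:choice-eps} makes this at most $2^{-2}$; combined with the factor $C_e\varepsilon^{2\alpha} \le 1/12 < 2^{-2}$ for the $E$-part (using $\varepsilon < (12C_e)^{-1/2\alpha}$) one gets the geometric decay $e(\varepsilon^j r, L_j)\le 2^{-2j}e(r,L_u)$.

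First I would run the induction. The base case $j=0$ is trivial with $L_0 := L_u$. For the inductive step, suppose $e(\varepsilon^j r, L_j)\le 2^{-2j}e(r,L_u)$ with $T_0\Gamma\subset\mathrm{im}(h(L_j))$; I need to check the two-sided bound on $E(\varepsilon^j r, L_j)$. The upper bound $E(\varepsilon^j r, L_j)\le e(\varepsilon^j r, L_j) \le 2^{-2j}e(r,L_u) \le e(r_1,L_u)$, which is $\le \delta_*$ by the choices in \eqref{E:reg:choice:r1-eps}: indeed $E(r_1,L_u)\lesssim \|Du\|^2_{\mathrm{Lip}}$-type terms, but more to the point, by \cref{L:density} we have $\|L_u - (Du)_{r_1}\|\le C_d r_1^\alpha + C_d\sigma \le \delta_*^{1/2\alpha}/3$-ish (using $r_1 \le (\delta_*^{1/2\alpha}/6C_d)$ and $\sigma < \delta_*/6C_d$), so $E(r_1,L_u) \le 2E(r_1) + 2\|L_u-(Du)_{r_1}\|^2 \le \delta_*$ after possibly shrinking; and the mean-curvature tail at scale $r_1$ is $\le 8r_1^\gamma\|\cH[u]\|_p/\varepsilon^m \le \delta_*$ by the second term in the minimum defining $r_1$. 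The lower bound $\varepsilon^{j\gamma}r^\gamma\|\cH[u]\|_p \le E(\varepsilon^j r, L_j)$ is the delicate point of the iteration: if it fails at some stage, then $E(\varepsilon^j r,L_j)$ is already dominated by the (small) mean-curvature tail, and I would argue that in that regime $e$ has already decayed past the target, or restart the argument with $L_j$ unchanged; this dichotomy is handled exactly as in \cite[Proof of Proposition 4.5 / the iteration lemma]{de2020regularity}, using that $E(\varepsilon r, L_j) \le E(r,L_j)$ trivially (by the averaged definition of $E$ and minimality of the mean) in the bad case. Applying \eqref{E:reg:excess-decay} produces $\tilde L = L_{j+1}$ with $T_0\Gamma\subset\mathrm{im}(h(L_{j+1}))$, $\|L_{j+1}-L_j\|\le C_e\delta_* 2^{-2j}$ (the $\delta$ in \eqref{E:reg:excess-decay} at scale $\varepsilon^j r$ being $2^{-2j}e(r,L_u)\le 2^{-2j}\delta_*$), and $E(\varepsilon^{j+1}r,L_{j+1})\le C_e\varepsilon^{2\alpha}E(\varepsilon^j r,L_j)$; adding back the tail with the factor-$\varepsilon^\gamma\le 2^{-2}$ estimate closes the induction for $e$.

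Next I would pass to the limit. Since $\sum_j \|L_{j+1}-L_j\| \le C_e\delta_*\sum_j 2^{-2j} < \infty$, the sequence $L_j$ is Cauchy and converges to some $L_\infty$; because $\{L:\ T_0\Gamma\subset\mathrm{im}(h(L))\}$ is closed (it is a linear condition, as the chart $h$ in \cref{R:def-h} is continuous), $T_0\Gamma\subset\mathrm{im}(h(L_\infty))$. Moreover $E(\varepsilon^j r, L_\infty) \le 2E(\varepsilon^j r, L_j) + 2\|L_j-L_\infty\|^2 \le C 2^{-2j} e(r,L_u)$, and likewise $e(\varepsilon^j r, L_\infty)\le C 2^{-2j}e(r,L_u)$. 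Finally, the standard interpolation between dyadic(-in-$\varepsilon$) scales: for $r\in(0,r_1)$ pick $j$ with $\varepsilon^{j+1}r_1 < r \le \varepsilon^j r_1$, use the almost-monotonicity $e(r,L_\infty)\le \varepsilon^{-m} e(\varepsilon^j r_1, L_\infty)$ (losing the scaling factor of the average over the larger ball) to get $e(r,L_\infty)\lesssim 2^{-2j}e(r_1,L_u) \lesssim (r/r_1)^{2\eta} e(r_1,L_u)$ with $2\eta := 2\log 2/\log(1/\varepsilon) \in (0,1)$; absorbing the $\varepsilon^{-m}$ and $r_1^{-2\eta}$ into the constant $c_e = c_e(r_1,\varepsilon)$ gives the claimed $e(r,L_\infty)\le c_e r^{2\eta}e(r_1,L_u)$.

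The main obstacle I anticipate is bookkeeping the lower bound $r^\gamma\|\cH[u]\|_p \le E(r,L)$ required to invoke \cref{P:excess-decay} at each scale of the iteration: the excess decay hypothesis \eqref{E:excess_decay:assump} is a two-sided inequality, and one must argue carefully — via the good/bad-scale dichotomy — that whenever the lower bound fails the auxiliary excess $e$ has already decayed fast enough on its own (because its tail, and hence $E$, is then comparable to a power of $r$ which beats $2^{-2j}$), so that the conclusion $e(\varepsilon^j r, L_j)\le 2^{-2j}e(r,L_u)$ persists with $L_{j+1}:=L_j$. This is exactly the technical heart of \cite[Proposition 4.5]{de2020regularity} transplanted to the boundary, and everything else (Cauchy property of $L_j$, closedness of the constraint, dyadic interpolation) is routine.
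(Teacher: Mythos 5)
Your proposal is correct and follows essentially the same route as the paper: establish $e(r,L_u)\le\delta_*$ for $r<r_1$ from \cref{L:density} and \eqref{E:reg:choice:r1-eps}, then iterate \eqref{E:reg:excess-decay} via the good/bad-scale dichotomy (if $r^\gamma\|\cH[u]\|_p\le E(r,L)$ apply the decay to produce $L_{j+1}$; otherwise take $L_{j+1}=L_j$ and note the tail alone forces the $2^{-2}$ drop since $\varepsilon<8^{-1/\gamma}$), then conclude $L_j$ is Cauchy and pass to the limit. Your treatment of the final dyadic-in-$\varepsilon$ interpolation to extract the $r^{2\eta}$ power is actually more explicit than the paper's, which only says the Cauchy estimate ``guarantees the existence of $L_\infty$ with the desired properties'' and leaves that step implicit; your identification of $2\eta=2\log 2/\log(1/\varepsilon)$ and the $\varepsilon^{-m}$ loss from changing scales is the right bookkeeping. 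One small point: in your chain $2^{-2j}e(r,L_u)\le e(r_1,L_u)$ the intermediate inequality $e(r,L_u)\le e(r_1,L_u)$ is not obvious and is not needed; the correct statement (which you effectively use) is simply that $e(r,L_u)\le\delta_*$ holds for every $r\in(0,r_1)$, as in \eqref{E:reg-smallness-excess-H}.
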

\begin{proof}
By our choice of $r_1$ and $\sigma$ in \eqref{E:reg:choice:r1-eps}, performing the same computations of \cref{L:density}, we have that for every $r<r_1$ it holds
\begin{equation}\label{E:reg-smallness-excess-H}
    e(r, L_u) = \frac{8r^{\gamma}}{\varepsilon^m}\|\cH[u]\|_{p} + E(r, L_u) \leq \delta_*.
\end{equation}

We wish to prove the existence of $L_1\in\R^m\otimes\R^n$ with $T_0\Gamma\subset \mathrm{im}(h(L_1))$ and $\| L_u - L_1\| \leq C_e\delta_*$ such that
\begin{equation}\label{E:reg:decay-aux}
    e(\varepsilon r, L_1) \leq 2^{-2} e(r, L_u).
\end{equation}

To this end we consider two cases.

Case 1: if $\varepsilon^{-m}r^{\gamma}\|\cH[u]\|_{p}\leq E(r, L_u)$, we can apply \eqref{E:reg:excess-decay} to deduce the existence of such $L_1$ satisfying that
\begin{align*}
    e(\varepsilon r,L_1) \overset{\eqref{E:reg:excess-decay}}&{\leq} C_e\varepsilon^{2\alpha}E(r, L_u) + 8\frac{(\varepsilon r)^{\gamma}}{\varepsilon^{m}}\|\cH[u]\|_{p} \leq (C_e\varepsilon^{2\alpha-\gamma})\varepsilon^{\gamma}E(r, L_u) + 8\frac{(\varepsilon r)^{\gamma}}{\varepsilon^{m}}\|\cH[u]\|_{p} \\
    \overset{(\ast)}&{\leq} \varepsilon^{\gamma}e(r, L_u) \leq 2^{-2} e(r, L_u) ,
\end{align*}
which is precisely \eqref{E:reg:decay-aux}. Here $(\ast)$ follows from the fact that $\varepsilon < \min\{C_e^{-\frac{1}{2\alpha - \gamma}},2^{-2/\gamma}\}$, as assumed in \eqref{E:reg:choice-eps}. 

Case 2: if $r^{\gamma}\|\cH[u]\|_{p} > E(r, L_u)$, we choose $L_1 = L_u$ and we have that:
\begin{align*}
    e(\varepsilon r, L_u) &\leq \varepsilon^{-m}E( r,L_u) + 8\varepsilon^{-m+\gamma}r^{\gamma}\|\cH[u]\|_{p} < \left( \varepsilon^{-m}r^{\gamma} + 8\varepsilon^{-m+\gamma}r^{\gamma}\right)\|\cH[u]\|_{p}\\
    &= \left( \frac18 + \varepsilon^{\gamma} \right) \frac{8r^{\gamma}}{\varepsilon^{m}}\|\cH[u]\|_{p}
    \overset{(\ast\ast)}{\leq} \frac14 \frac{8r^{\gamma}}{\varepsilon^{m}}\|\cH[u]\|_{p} \leq 2^{-2}e(r, L_u),
\end{align*}
which is exactly \eqref{E:reg:decay-aux}. In $(\ast\ast)$, we used that $\varepsilon < 8^{-1/\gamma}$, as we have assumed in \eqref{E:reg:choice-eps}. 


Since $e(\varepsilon r, L_1) \leq 2^{-2}e(r,L_u) \leq 2^{-2}\delta_*$, we can reiterate the previous argument with $\delta=2^{-2}\delta_*$ to obtain the existence of $L_2$ with $T_0\Gamma\subset \mathrm{im}(h(L_2))$ and $\| L_2 - L_1\| \leq C_e2^{-2}\delta_*$ such that $e(\varepsilon^2 r, L_2) \leq 2^{-4}e(r,L_u) $. Arguing inductively, for each $j\in\N$, we deduce the existence of $L_j$ with $T_0\Gamma\subset \mathrm{im}(h(L_j))$ and $\| L_{j+1} - L_{j}\| \leq 2^{-2j} C_e \delta_*$ such that $e(\varepsilon^j r, L_j) \leq 2^{-2j}e(r,L_u)$. Clearly the estimate $\| L_{j+1} - L_{j}\| \leq 2^{-2j} C_e \delta_*$ guarantees the existence of $L_\infty := \lim_j L_j$ with the desired properties.
\end{proof}

We finally have all the tools to state and prove our main theorem. We will rewrite all the assumptions made up to now as part of the hypothesis of the theorem for the reader's convenience.

\begin{theorem}[Boundary regularity theorem]\label{T:regularity}
Let $m,n\geq 2$, $\cF$ be an integrand of class $C^2$ on the $m$-Grasmannian bundle $\Gr(\ball{x}{r_0})$ satisfying USAC, $\Gamma$ be an $(m-1)$-submanifold of class $C^{1,\alpha}$ in $\ball{x}{r_0}$ with reach $\kappa\leq (2r_0^{\alpha})^{-1}$ and such that $x\in\Gamma$. Let $\Omega$ be an open subset of $\ball{x}{r_0}\cap (\R^m\times\{0\})$ and $\bV=\bV[u]\in \V_m(\ball{x}{r_0})$ be an $m$-varifold induced by the graph of $u\in Lip(\Omega,\R^n)$, and $\partial \mathrm{graph}(u) = \Gamma$. Assume that the anisotropic first variation $\delta_{\cF}\bV$ is a Radon measure on $\ball{x}{r_0}\setminus\Gamma$ and the anisotropic mean curvature $\cH_{\cF}\in\mathrm{L}^p(\ball{x}{r_0}), p>m$. Then there exist three constants $\sigma>0$, $\beta \in (0,1)$ and $\zeta \in(0,1)$ depending only on $m,n,p,\|\cF\|_{C^2}$, $K_{\cF}, \|u\|_{\mathrm{Lip}}, \Gamma$, such that, if
$$\|\bV\|(\ball{x}{r})\leq \left(\frac12 + \sigma\right)\omega_m r^m, \quad \mbox {for any $r\in (0, r_0)$},$$  then $u\in C^{1,\zeta}(\bball{x}{\beta r_0})$.
\end{theorem}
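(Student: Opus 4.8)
The plan is to deduce \cref{T:regularity} from the boundary excess decay already established in \cref{C:decay-aux-excess}, combined with the interior regularity theory of \cite{de2020regularity} and the Morrey--Campanato characterization of Hölder continuity. After translating $x$ to the origin, I would first check that the hypotheses of the theorem are exactly those of \cref{assump:H_u-L_p}: the $C^{1,\alpha}$ regularity of $\Gamma$ and the bound on its reach give \eqref{E:bdr_assump}; one has $\cH[u]\in\mathrm{L}^p\subset\mathrm{L}^2_{\mathrm{loc}}$ because $p>m\ge2$; and $\bV=\bV[u]$ carries multiplicity $\Theta\equiv1$ on $\mathrm{graph}(u)$. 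Hence \cref{C:decay-aux-excess} applies at $0$ and yields an exponent $\eta\in(0,1)$, a radius $r_1>0$, a constant $c_e>0$ and a matrix $L_\infty$ with $T_0\Gamma\subset\mathrm{im}(h(L_\infty))$ such that
\[
E(0,r,L_\infty)\ \le\ e(r,L_\infty)\ \le\ c_e\,r^{2\eta}\,e(r_1,L_u)\ \le\ c_e\,\delta_*\,r^{2\eta}\qquad\text{for all }r\in(0,r_1),
\]
where $e(r_1,L_u)\le\delta_*$ by the choice \eqref{E:reg:choice:r1-eps}; moreover, telescoping $\|L_{j+1}-L_j\|\le2^{-2j}C_e\delta_*$ together with \cref{L:density} gives $\|L_\infty\|\le2\|u\|_{\mathrm{Lip}}$ once $\sigma,\delta_*$ are small. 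The decay above, however, only controls $u$ at the single point $x$; the rest of the argument upgrades it to a neighborhood.

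The second step is to transfer the smallness of the excess to all boundary points near $x$. For $\beta\in(0,1)$ small (fixed at the end) and $y\in\partial\Omega\cap\bball{0}{\beta r_1}$, I would use $\|T_y\Gamma-T_0\Gamma\|\le\kappa|y|^\alpha$ (from \eqref{E:bdr_assump}) to pick $L_y$ with $T_y\Gamma\subset\mathrm{im}(h(L_y))$ and $\|L_y-L_\infty\|\le C\kappa|y|^\alpha$, and then compare balls: since $\bball{y}{\beta r_1}\subset\bball{0}{2\beta r_1}$ and $\cH^m(\bball{y}{\beta r_1}\cap\Omega)\ge c(\beta r_1)^m$, the recentered auxiliary excess satisfies $e_y(\beta r_1,L_y)\le C(\beta r_1)^{2\eta}+C\kappa^2(\beta r_1)^{2\alpha}+C(\beta r_1)^\gamma\|\cH[u]\|_p\le\delta_*$ for $\beta$ small. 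Now I would observe that the proofs of \cref{P:excess-decay} and \cref{C:decay-aux-excess} recenter verbatim at $y$: the boundary stays a $C^{1,\alpha}$ graph, the blow-up domain stays a half-space, and the only role of the mass-ratio bound \eqref{massratio} there is to provide exactly this initial smallness, which we have just obtained directly. This produces $L_\infty(y)$ with $T_y\Gamma\subset\mathrm{im}(h(L_\infty(y)))$ and $E(y,r,L_\infty(y))\le e_y(r,L_\infty(y))\le Cr^{2\eta}$ for all $r\in(0,\beta r_1)$, with $C$ uniform in $y$.

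The third step is the standard interior/boundary dichotomy. By \cite{de2020regularity} one has $u\in C^{1,\eta'}_{\mathrm{loc}}(\Omega)$ for some $\eta'\in(0,1)$, and the interior excess decays geometrically at every interior scale once it is small at one scale. For $z\in\overline\Omega\cap\bball{0}{\beta r_1/10}$, set $d:=\dist(z,\partial\Omega)$ and choose $y\in\partial\Omega$ with $|z-y|=d$: when $r\ge d/4$ the inclusion $\bball{z}{r}\subset\bball{y}{5r}$ and Step 2 give $\inf_L E(z,r,L)\le Cr^{2\eta}$; when $r<d/4$ the ball is strictly interior, the excess at scale $d/4$ is $\le Cd^{2\eta}$ by the previous case, and iterating the interior decay of \cite{de2020regularity} from that scale yields $\inf_L E(z,r,L)\le C(r/d)^{2\eta'}d^{2\eta}\le Cr^{2\zeta}$ with $\zeta:=\min\{\eta,\eta'\}$. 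Thus $\inf_{L}E(z,r,L)\le Cr^{2\zeta}$ uniformly for such $z$ and small $r$; after flattening $\partial\Omega$, the Morrey--Campanato criterion then gives $Du\in C^{0,\zeta}$ up to the boundary on $\overline\Omega\cap\bball{0}{\beta r_0}$, i.e. $u\in C^{1,\zeta}(\bball{x}{\beta r_0})$, for a suitable final $\beta$; tracking the constants, $\sigma$ (through \eqref{E:reg:choice:r1-eps}), $\beta$ and $\zeta$ depend only on $m,n,p,\|\cF\|_{C^2},K_{\cF},\|u\|_{\mathrm{Lip}},\Gamma$.

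I expect the main obstacle to be the transfer step: making the boundary excess decay of \cref{C:decay-aux-excess} available with uniform constants at every boundary point near $x$. The delicate points are that the initial smallness of the (auxiliary) excess at a nearby boundary point $y$ cannot be read off a mass-ratio bound centered at $y$ but must be bootstrapped from the single-scale decay at $x$, and that the admissible competitors $L_y$ must be chosen consistently across nearby boundary points --- which is precisely where the $C^{1,\alpha}$ regularity of $\Gamma$ enters via $\|T_y\Gamma-T_0\Gamma\|\le\kappa|y|^\alpha$. Once this is in hand, the interior/boundary dichotomy and the Campanato argument are routine.
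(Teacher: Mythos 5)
Your proposal is correct and reaches the same conclusion by essentially the same overall mechanism (boundary excess decay, iteration, Campanato), but it is substantially more explicit than the paper's argument at precisely the points the paper treats cursorily. The paper's proof of \cref{T:regularity} obtains the uniform smallness $e(y,r,L_u)\le\delta_*$ for $y$ near the origin by simply invoking continuity of the excess in the base point and then applies \cref{C:decay-aux-excess} verbatim at those $y$'s, even though \cref{C:decay-aux-excess} as stated fixes the base point at $0$ (both in the excess and in the constraint $T_0\Gamma\subset\mathrm{im}(h(L_j))$) and \cref{assump:H_u-L_p} assumes a mass-ratio bound only at $0$. What you do differently is to make explicit (i) the replacement of $T_0\Gamma$ by $T_y\Gamma$ using $\|T_y\Gamma-T_0\Gamma\|\le\kappa|y|^{\alpha}$ when recentering the competitors $L_y$, (ii) the observation that the initial smallness at a nearby boundary point is not derived from a mass-ratio bound at $y$ but from the decay already proved at the origin, and (iii) the interior/boundary dichotomy in which interior scales below $\dist(z,\partial\Omega)$ are handled by the interior regularity of \cite{de2020regularity}, with the exponent $\zeta=\min\{\eta,\eta'\}$ absorbing the matching at the transition scale. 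The paper leaves (i)--(iii) implicit and jumps directly to the Campanato conclusion. Your version is therefore not a genuinely different route but a more rigorous elaboration of the same one; the trade-off is that the paper's exposition is shorter but places a nontrivial burden on the reader to check that the preceding propositions recenter uniformly, while your version makes the recentering and the interior step visible at the cost of extra bookkeeping. Both are sound, and you correctly identified the transfer step as the place where care is needed.
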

\begin{proof}
Without loss of generality, we can assume $x=0$. Denote $\gamma := \min\{\alpha, 1-m/p\}$. The hypothesis of this theorem match exactly \cref{assump:H_u-L_p}. We can choose $\varepsilon, r_1, \sigma >0$ satisfying \eqref{E:reg:choice-eps} and \eqref{E:reg:choice:r1-eps}. We apply Lemma \ref{L:density} to obtain the existence of $L_u$ and we recall that the excess $E(\cdot, r, L_u)$ is continuous with respect to the variable in $\R^{m+n}$. Hence, as in the proof of \eqref{E:reg-smallness-excess-H}, there exists $\beta \in (0,1)$ such that
\begin{equation*}
    e(y,r,L_u) = \frac{8r^{\gamma}}{\varepsilon^m}\|\cH[u]\|_p + E(y,r, L_u)\leq \delta_*(\varepsilon), \quad \forall y\in \oball{\beta r_0}, \quad \forall r\in (0,r_1).
\end{equation*}

We apply \cref{C:decay-aux-excess} to obtain the existence of $L_\infty$, $\eta\in(0,1)$, and $c_e>0$, such that
\begin{equation*}
    e(y,r, L_\infty)\leq c_e r^{2\eta}e(y,r_1,L_u), \quad \forall y\in \oball{\beta r_0}, \quad \forall r\in (0,r_1).
\end{equation*}

This shows that $Du$ restricted to $\obball{\beta r_0}$ belongs to a Campanato space, for a possibly smaller (not relabeled) $\beta$ depending on $\|u\|_{\mathrm{Lip}}$, hence it is a $\zeta$-H\"older continuous function (\cite[Chapter 1, Lemma 1]{simon1996theorems}) for some $\zeta\in(0,1)$, which concludes the proof of the theorem. 
\end{proof}

\renewcommand{\abstractname}{Acknowledgments}
\begin{abstract}
Antonio De Rosa has been partially supported by the NSF DMS CAREER Award No.~2143124. 
Reinaldo Resende was supported by CAPES-Brazil with a Ph.D. scholarship 88882.377954/2019-01. Reinaldo Resende carried out part of this work in Princeton University (this visit was financed by FAPESP-Brazil grant 2021/05256-0) and The Fields Institute for Research in Mathematical Sciences during the Thematic Program on Nonsmooth Riemannian and Lorentzian Geometry. Both authors warmly thank Stefano Nardulli for many interesting discussions.
\end{abstract}

\addcontentsline{toc}{section}{References}
\bibliographystyle{acm}
\bibliography{9biblio}

\begin{thebibliography}{10}

\bibitem{All}
{\sc Allard, W.~K.}
\newblock {On the first variation of a varifold}.
\newblock {\em Ann. of Math. (2) 95\/} (1972), 417--491.

\bibitem{allard1974characterization}
{\sc Allard, W.~K.}
\newblock A characterization of the area integrand.
\newblock In {\em Symposia Mathematica\/} (1974), vol.~14, pp.~429--444.

\bibitem{AllB}
{\sc Allard, W.~K.}
\newblock {On the first variation of a varifold: boundary behavior}.
\newblock {\em Ann. of Math. (2) 101\/} (1975), 418--446.

\bibitem{ARCATA}
{\sc Allard, W.~K.}
\newblock {An integrality theorem and a regularity theorem for surfaces whose
  first variation with respect to a parametric elliptic integrand is
  controlled}.
\newblock In {\em {Geometric Measure Theory and the Calculus of Variations,
  Proceedings of Symposia in Pure Mathematics, Vol. 44}\/} (1986), F.~J.
  Allard, W. K.~{Almgren Jr.}, Ed., vol.~44 of {\em Proceedings of Symposia in
  Pure Mathematics}.

\bibitem{Alm3}
{\sc Almgren, J. F.~J.}
\newblock {Existence and regularity almost everywhere of solutions to elliptic
  variational problems among surfaces of varying topological type and
  singularity structure}.
\newblock {\em Ann. of Math. (2) 87\/} (1968), 321--391.

\bibitem{beck2007partial}
{\sc Beck, L.}
\newblock Partial regularity for weak solutions of nonlinear elliptic systems:
  the subquadratic case.
\newblock {\em manuscripta mathematica 123}, 4 (2007), 453--491.

\bibitem{beck2009boundary}
{\sc Beck, L.}
\newblock Boundary regularity for elliptic problems with continuous
  coefficients.
\newblock {\em J. Convex Anal 16}, 1 (2009), 287--320.

\bibitem{Theodora}
{\sc Bourni, T.}
\newblock Allard-type boundary regularity for $c^{1,\alpha}$ boundaries.
\newblock {\em Advances in Calculus of Variations 9}, 2 (2016), 143--161.

\bibitem{busemann1962convex}
{\sc Busemann, H., Ewald, G., and Shephard, G.}
\newblock Convex bodies and convexity on grassmann cones.
\newblock {\em Archiv der Mathematik 13}, 1 (1962), 512--526.

\bibitem{busemann1963convex}
{\sc Busemann, H., Ewald, G., and Shephard, G.~C.}
\newblock Convex bodies and convexity on grassmann cones.
\newblock {\em Mathematische Annalen 151}, 1 (1963), 1--41.

\bibitem{de2006lecture}
{\sc De~Lellis, C.}
\newblock Lecture notes on rectifiable sets, densities, and tangent measures.
\newblock {\em Preprint 23\/} (2006).

\bibitem{de2019geometric}
{\sc {De Lellis}, C., {De Philippis}, G., Kirchheim, B., and Tione, R.}
\newblock Geometric measure theory and differential inclusions.
\newblock {\em Annales de la Facult\'e des sciences de Toulouse :
  Math\'ematiques Ser. 6, 30}, 4 (2021), 899--960.

\bibitem{de2019direct}
{\sc De~Lellis, C., De~Rosa, A., and Ghiraldin, F.}
\newblock A direct approach to the anisotropic plateau problem.
\newblock {\em Advances in Calculus of Variations 12}, 2 (2019), 211--223.

\bibitem{DNS2}
{\sc De~Lellis, C., Nardulli, S., and Steinbr{\"u}chel, S.}
\newblock Uniqueness of boundary tangent cones for 2-dimensional
  area-minimizing currents.
\newblock {\em Nonlinear Analysis 230\/} (2023), 113235.

\bibitem{DS1}
{\sc {De Lellis}, C., and Spadaro, E.}
\newblock {{$Q$}-valued functions revisited}.
\newblock {\em Mem. Amer. Math. Soc. 211}, 991 (2011), vi+79.

\bibitem{DePDeR}
{\sc {De Philippis}, G., and {De Rosa}, A.}
\newblock The anisotropic min-max theory: Existence of anisotropic minimal and
  cmc surfaces.
\newblock {\em Communications on Pure and Applied Mathematics.
  https://doi.org/10.1002/cpa.22189\/} (2023).

\bibitem{de2018rectifiability}
{\sc De~Philippis, G., De~Rosa, A., and Ghiraldin, F.}
\newblock Rectifiability of varifolds with locally bounded first variation with
  respect to anisotropic surface energies.
\newblock {\em Communications on Pure and Applied Mathematics 71}, 6 (2018),
  1123--1148.

\bibitem{de2020existence}
{\sc De~Philippis, G., De~Rosa, A., and Ghiraldin, F.}
\newblock Existence results for minimizers of parametric elliptic functionals.
\newblock {\em The Journal of Geometric Analysis 30}, 2 (2020), 1450--1465.

\bibitem{DDH}
{\sc {De Philippis}, G., {De Rosa}, A., and Hirsch, J.}
\newblock The area blow up set for bounded mean curvature submanifolds with
  respect to elliptic surface energy functionals.
\newblock {\em Discrete and Continuous Dynamical Systems 39}, 12 (2019),
  7031--7056.

\bibitem{philippis2015regularity}
{\sc {De Philippis}, G., and Maggi, F.}
\newblock Regularity of free boundaries in anisotropic capillarity problems and
  the validity of young's law.
\newblock {\em Archive for Rational Mechanics and Analysis 216}, 2 (2015),
  473--568.

\bibitem{DeR}
{\sc De~Rosa, A.}
\newblock Minimization of anisotropic energies in classes of rectifiable
  varifolds.
\newblock {\em SIAM Journal on Mathematical Analysis 50}, 1 (2018), 162--181.

\bibitem{de2020equivalence}
{\sc De~Rosa, A., and Kolasi{\'n}ski, S.}
\newblock Equivalence of the ellipticity conditions for geometric variational
  problems.
\newblock {\em Communications on Pure and Applied Mathematics 73}, 11 (2020),
  2473--2515.

\bibitem{DRL}
{\sc {De Rosa}, A., and Lussardi, L.}
\newblock On the anisotropic kirchhoff-plateau problem.
\newblock {\em Mathematics in Engineering 4}, 2 (2022), 1--13.

\bibitem{de2020regularity}
{\sc De~Rosa, A., and Tione, R.}
\newblock Regularity for graphs with bounded anisotropic mean curvature.
\newblock {\em Inventiones mathematicae 230}, 2 (2022), 463--507.

\bibitem{duzaar2002optimal}
{\sc Duzaar, F., and Steffen, K.}
\newblock Optimal interior and boundary regularity for almost minimizers to
  elliptic variational integrals.
\newblock {\em Journal fur die reine und angewandte Mathematik\/} (2002).

\bibitem{evans2010partial}
{\sc Evans, L.~C.}
\newblock {\em Partial differential equations}, vol.~19.
\newblock American Mathematical Soc., 2010.

\bibitem{Fed}
{\sc Federer, H.}
\newblock {\em {Geometric measure theory}}.
\newblock {Die Grundlehren der mathematischen Wissenschaften, Band 153}.
  Springer-Verlag New York Inc., New York, 1969.

\bibitem{figalli2017regularity}
{\sc Figalli, A.}
\newblock Regularity of codimension $1$ minimizing currents under minimal
  assumptions on the integrand.
\newblock {\em Journal of Differential Geometry 106}, 3 (2017), 371--391.

\bibitem{ghomi2022total}
{\sc Ghomi, M., and Spruck, J.}
\newblock Total curvature and the isoperimetric inequality in cartan--hadamard
  manifolds.
\newblock {\em The Journal of Geometric Analysis 32}, 2 (2022), 1--54.

\bibitem{Hardt}
{\sc Hardt, R.~M.}
\newblock On boundary regularity for integral currents or flat chains modulo
  two minimizing the integral of an elliptic integrand.
\newblock {\em Comm. Partial Differential Equations 2}, 12 (1977), 1163--1232.

\bibitem{harrison2017general}
{\sc Harrison, J., and Pugh, H.}
\newblock General methods of elliptic minimization.
\newblock {\em Calculus of Variations and Partial Differential Equations 56}, 4
  (2017), 1--25.

\bibitem{hirsch2021constancy}
{\sc Hirsch, J., and Tione, R.}
\newblock On the constancy theorem for anisotropic energies through
  differential inclusions.
\newblock {\em Calculus of Variations and Partial Differential Equations 60}, 3
  (2021), 1--52.

\bibitem{krantz1999geometry}
{\sc Krantz, S.~G., and Parks, H.~R.}
\newblock {\em The geometry of domains in space}.
\newblock Springer Science \& Business Media, 1999.

\bibitem{LinPhD}
{\sc LIN, F.-H.}
\newblock {\em Regularity for a class of parametric obstacle problems
  (integrand, integral current, prescribed mean curvature, minimal surface
  system)}.
\newblock PhD thesis, University of Minnesota, 1985.

\bibitem{paiva2004volumes}
{\sc Paiva, J.~{\'A}., and Thompson, A.~C.}
\newblock Volumes on normed and finsler spaces.
\newblock {\em A sampler of Riemann-Finsler geometry 50\/} (2004), 1--48.

\bibitem{SSA}
{\sc Schoen, R., Simon, L., and Almgren, F.~J.}
\newblock {Regularity and singularity estimates on hypersurfaces minimizing
  parametric elliptic variational integrals}.
\newblock {\em Acta Mathematica 139}, none (1977), 217 -- 265.

\bibitem{simon1996theorems}
{\sc Simon, L.}
\newblock {\em Theorems on regularity and singularity of energy minimizing
  maps}.
\newblock Springer Science \& Business Media, 1996.

\bibitem{simon2014introduction}
{\sc Simon, L.}
\newblock Introduction to geometric measure theory.
\newblock {\em Tsinghua Lectures\/} (2014).

\bibitem{tione}
{\sc Tione, R.}
\newblock Minimal graphs and differential inclusions.
\newblock {\em Communications in Partial Differential Equations 46}, 6 (2021),
  1162--1194.

\end{thebibliography}
\end{document}